\newcommand{\minimize}[2]{\ensuremath{\underset{\substack{{#1}}}%
{\mathrm{minimize}}\;\;#2 }}
\newcommand{\menge}[2]{\big\{{#1} \;|\; {#2}\big\}}
\newcommand{\emp}{\ensuremath{{\varnothing}}}
\newcommand{\scal}[2]{\left\langle{#1}\mid {#2} \right\rangle}
\newcommand{\HH}{\ensuremath{\mathcal H}}
\newcommand{\GG}{\ensuremath{\mathcal G}}
\newcommand{\BL}{\ensuremath{\EuScript B}\,}
\newcommand{\HHH}{\ensuremath{\boldsymbol{\mathcal H}}}
\newcommand{\GGG}{\ensuremath{\boldsymbol{\mathcal G}}}
\newcommand{\VV}{\ensuremath{\boldsymbol{V}}}
\newcommand{\CCC}{\ensuremath{\boldsymbol{C}}}
\newcommand{\sri}{\ensuremath{\operatorname{sri}}}
\newcommand{\RR}{\ensuremath{\mathbb R}}
\newcommand{\RP}{\ensuremath{\left[0,+\infty\right[}}
\newcommand{\RPP}{\ensuremath{\,\left]0,+\infty\right[}}
\newcommand{\NN}{\ensuremath{\mathbb N}}
\newcommand{\dom}{\ensuremath{\operatorname{dom}}}
\newcommand{\prox}{\ensuremath{\operatorname{prox}}}
\newcommand{\cart}{\ensuremath{\mbox{\Large{$\times$}}}}
\newcommand{\argmin}{\ensuremath{\operatorname{argmin}}}
\newcommand{\ran}{\ensuremath{\operatorname{ran}}}
\newcommand{\zer}{\ensuremath{\operatorname{zer}}}
\newcommand{\gra}{\ensuremath{\operatorname{gra}}}
\newcommand{\vv}{\ensuremath{\boldsymbol{v}}}
\newcommand{\xx}{\ensuremath{\boldsymbol{x}}}
\newcommand{\pp}{\ensuremath{\boldsymbol{p}}}
\newcommand{\yy}{\ensuremath{\boldsymbol{y}}}
\newcommand{\rr}{\ensuremath{\boldsymbol{r}}}
\newcommand{\ww}{\ensuremath{\boldsymbol{w}}}
\newcommand{\LL}{\ensuremath{\boldsymbol{L}}}
\newcommand{\PPP}{\ensuremath{\mathsf{P}}}
\newcommand{\UU}{\ensuremath{\boldsymbol{U}}}
\newcommand{\E}{\ensuremath{\mathsf{E}}}
\newcommand{\AAA}{\ensuremath{\boldsymbol{A}}}
\newcommand{\FF}{\ensuremath{{\EuScript F}}}
\newcommand{\Id}{\ensuremath{\operatorname{Id}}}
\newcommand{\weakly}{\ensuremath{\rightharpoonup}}
\newtheorem{theorem}{Theorem}[section]
\newtheorem{lemma}[theorem]{Lemma}
\newtheorem{corollary}[theorem]{Corollary}
\theoremstyle{plain}{\theorembodyfont{\rmfamily}
}
\theoremstyle{plain}{\theorembodyfont{\rmfamily}
}
\theoremstyle{plain}{\theorembodyfont{\rmfamily}
\newtheorem{algorithm}[theorem]{Algorithm}}
\theoremstyle{plain}{\theorembodyfont{\rmfamily}
}
\theoremstyle{plain}{\theorembodyfont{\rmfamily}
\newtheorem{problem}[theorem]{Problem}}
\theoremstyle{plain}{\theorembodyfont{\rmfamily}
\newtheorem{remark}[theorem]{Remark}}
\theoremstyle{plain}{\theorembodyfont{\rmfamily}
}
\definecolor{labelkey}{rgb}{0,0.08,0.45}
\definecolor{refkey}{rgb}{0,0.6,0.0}
\definecolor{Brown}{rgb}{0.45,0.0,0.05}
\definecolor{dgreen}{rgb}{0.00,0.49,0.00}
\definecolor{dblue}{rgb}{0,0.08,0.75}
\numberwithin{equation}{section}
\begin{document}
\title{\sffamily\huge 
A first-order stochastic primal-dual algorithm with correction step
}
\author{Lorenzo Rosasco$^{1,2}$, Silvia Villa$^1$ and  B$\grave{\text{\u{a}}}$ng C\^ong V\~u$^1$\\ 
\small$^1$ LCSL, Istituto Italiano di Tecnologia and Massachusetts Institute of Technology\\
\small Bldg. 46-5155, 77 Massachusetts Avenue, Cambridge, MA 02139, USA\\
\small $^2$ DIBRIS, Universit\`a di Genova, Via Dodecaneso 35\\ 
\small 16146 Genova, Italy\\
\url{lrosasco@mit.edu};\; \url{silvia.villa@iit.it};\;\url{cong.bang@iit.it}
}
\maketitle
\begin{abstract}
We investigate the convergence properties of a stochastic primal-dual splitting
algorithm for solving structured monotone inclusions involving the sum of a cocoercive operator 
and  a composite monotone operator. The proposed method is the stochastic extension to 
monotone inclusions of a proximal method studied in \cite{DroSabTeb15, LorVer11} 
for saddle point problems. It consists in a forward step 
determined by the stochastic evaluation of the cocoercive operator, a backward step in the dual variables
involving the resolvent of the monotone operator, and an additional forward step using the  stochastic evaluation
of the cocoercive introduced in the first step.  
We prove weak  almost sure convergence of the iterates by showing that the primal-dual sequence
generated by the method is stochastic quasi Fej\'er-monotone with respect to the set of zeros
of the considered primal and dual inclusions. 
Additional results on ergodic convergence in expectation are considered for the 
special case of saddle point models. 
\end{abstract}

{\bf Keywords:} 
monotone inclusion,
maximal monotone operator,
operator splitting,
cocoercive operator,
composite operator,
duality,
stochastic errors,
primal-dual algorithm
 
 {\bf Mathematics Subject Classifications (2010)}: 47H05, 49M29, 49M27, 90C25 

\section{Introduction}
This paper is concerned with the algorithmic solution, in a stochastic setting, of 
structured monotone inclusions defined by the sum of a cocoercive operator and 
a monotone operator composed with a linear transformation and its adjoint. 
This  problem arises in many applications,
such as  variational inequalities and equilibrium problems \cite{Facc03}, signal and image processing \cite{siam05,Dau04}, 
game theory \cite{Bric13}, and statistical learning \cite{Devi11,Duch09,MRSVV10,RosVilMos13,VilSal13}.
A necessarily incomplete list of related works include
 \cite{BotCseHei15,siop2,ChaPoc11,
Cham14,ComPes12,30Combettes13,Con13,Davis14,PesKom15,aicm1}.

For monotone inclusions with the considered  structure, key is the joint solution of the primal problem and its associated dual form. 
Indeed, primal-dual schemes have several advantages, since they do not require the inversion of any of the
involved linear functions, and independently {\em activate} each of the monotone operators. 
The single-valued operators is involved in the forward step, while  the set-valued operator  appears in the  backward steps and  
requires the computation of the resolvent \cite{ComPes12,aicm1}.  \\
Recently, stochastic versions of splitting methods for monotone inclusions have been studied. This is relevant to
consider practical situations where operators 
are known only through measurements subject to random noise, or when 
 the  computation of a stochastic estimate is cheaper than the evaluation of the operator itself.   
Among the many approaches, we mention
stochastic forward-backward splitting \cite{plc14,LSB14b, BiaHac15},  stochastic Douglas-Rachford \cite{plc14}, and 
stochastic versions of primal-dual methods as in \cite{BiaHacIut14,plc14,pesquet14, RosVilVu15}.
These works  found  natural applications in stochastic optimization \cite{plc14,LSB14b} and machine learning  \cite{Duch09,LSB14a}.

In this paper we focus on a primal dual splitting method, that generalizes to monotone inclusions
the so called proximal alternating predictor-corrector algorithm (PAPC), proposed independently in \cite{LorVer11} for
regularized least squares minimization problems and in \cite{DroSabTeb15} for saddle point problems. 
In particular,  we allow for stochastic errors: the proposed algorithm requires the computation 
of the resolvent of the maximal monotone operators at each step, and 
it uses a stochastic approximation of the cocoercive operator. The update of the primal variable requires an extra step 
(called a correction step in \cite{DroSabTeb15}). In this respect, 
the algorithm resembles the extragradient method proposed in \cite{Nem04}, but differs from it since 
 it does not require extra evaluations of the involved operators. Indeed, 
the correction step does not impact the complexity of the method since it requires only matrix/vector multiplications. 
The considered algorithm is also related to the general inertial splitting scheme \cite[Algorithm 4.41]{RosVilVu15}
for solving multivariate monotone inclusions. In contrast to \cite[Algorithm 4.41]{RosVilVu15}, 
we allow for an additional projection step on a set of linear constraints, and  consider variable step-sizes. 
Our analysis establishes almost sure weak convergence of the iterates generated by the method, and ergodic convergence 
under more general conditions on the error when saddle points problems are considered. 
The analysis differs from the one in \cite{DroSabTeb15,LorVer11}, and  is based on variable metric
 stochastic quasi-Fejer sequences \cite{Vu15}.   
 
The paper is organized as follows: in Section~\ref{backgr} we introduce notation and preliminaries. 
We then  present the problem, the algorithm, and we prove almost sure convergence in Section~\ref{sec:main}.
In Section~\ref{sec:erg} we focus on minimization problems and their saddle point formulations. 
For this special case, we prove ergodic convergence in expectation of the duality gap under more general conditions on the errors
and the step-sizes. 
Finally, in Section~\ref{sec:appl}, we consider the case of sum of composite inclusions and we show how to apply the general scheme
to this case, using the product space reformulation \cite{siop2}. As a corollary, we obtain convergence 
results for structured minimization problems.

\begin{problem} 
\label{p:prob}
Let $\beta\in\left]0,+\infty\right[$, let $\HH$ and $\GG$ be real Hilbert spaces, let $B\colon\HH\to\HH$ 
be a $\beta$-cocoercive operator, let $V$ be a closed vector subspace of $\HH$, let 
$A\colon \GG\to 2^{\GG}$ be a maximally monotone operator,  and let $L\colon\HH\to\GG$ be a bounded linear operator. 
The normal cone operator to $V$ is denoted by $N_V$.
Denote by $\mathcal{P}$ the set of all points $\overline{x}\in\HH$ such that 
\begin{equation}
0 \in B\overline{x} + L^* A(L\overline{x}) + N_V\overline{x},
\end{equation}  
and $\mathcal{D}$ the set of all $\overline{v}\in\GG$ such that 
 \begin{equation}
0  \in -L(B+N_V)^{-1}(-L^*\overline{v}) + A^{-1}\overline{v},
\end{equation}  
The problem is to find a point $(\overline{x},\overline{v})$ in $\mathcal{P}\times\mathcal{D}$.
\end{problem}
Note that since $V$ is a closed vector subspace,
\[
 N_Vx=
\begin{cases}
V^\perp & \text{if } x\in V\\
\varnothing & \text{otherwise } .
\end{cases} 
\]
\section{Notation and preliminary results}
\label{backgr}
Throughout, $\HH$  is a real separable Hilbert space.
We denote by  $\scal{\cdot}{\cdot}$ and $\|\cdot\|$  the scalar product and the associated norm
of $\HH$.  
The symbols $\weakly$ and $\to$ denote weak and strong convergence, respectively. 
We denote  by $\ell_+^1(\NN)$ the set of summable sequences in  $\RP$, and 
by $\BL(\HH)$ the space of linear operators from $\HH$ into itself. 
Let $U\in\BL(\HH)$ be self-adjoint and strongly positive, i.e. 
\begin{equation}
(\exists \chi\in\RPP)(\forall x\in\HH) \quad \scal{Ux}{x}\geq \chi\|x\|^2.
\end{equation} 
We define a scalar product and a norm respectively by
\[(\forall x\in\HH)(\forall y\in\HH)\quad\scal{x}{y}_U=\scal{Ux}{y}
\quad\text{and}\quad\|x\|_U=\sqrt{\scal{Ux}{x}}.
\]
Let $A\colon\HH \to 2^\HH$ be a set-valued operator.
The domain and the graph of $A$ are defined by  
$$\dom A =\menge{x\in\HH}{Ax\neq\emp}\;\text{and}\;
\gra A =\menge{(x,u)\in\HH\times\HH}{u\in Ax}.$$
The set of zeros of $A$ is $\zer A=\menge{x\in\HH}{0\in Ax}$
and the range of $A$ is  $\ran A=A(\HH)$.
The inverse of $A$  is $A^{-1}\colon\HH \to 2^\HH\colon u\mapsto 
\menge{x\in\HH}{u\in Ax}$. The resolvent of $A$ is 
\begin{equation}
\label{eq:res}
J_A=(\Id+A)^{-1},
\end{equation}
where $\Id$ denotes the identity operator of $\HH$. Moreover, 
$A$ is monotone if
\[
(\forall (x,u)\in\gra A) (\forall (y,v)\in\gra A)\quad \scal{x-y}{u-v} \geq 0, 
\]
and maximally so, if there exists no monotone operator $\widetilde{A}\colon\HH\to\HH$
such that $\gra A\subset \gra \widetilde{A}\neq \gra A$.
If  $A$ is monotone, then $J_A$ is single-valued and nonexpansive, and, 
in addition, if $A$ is maximally monotone, then $\dom J_A=\HH$.

 Let $\Gamma_0(\HH)$ be the class of proper lower semicontinuous convex functions from $\HH$ to
$\left]-\infty,+\infty\right]$. For any self-adjoint strongly positive operator
$U\in \BL(\HH)$ and $f\in\Gamma_0(\HH)$, we define
 \begin{equation}
\label{set0}
\prox_{f}^{U}\colon\HH\to\HH \colon x\mapsto\underset{y\in\HH}{\argmin}\: 
\big( f(y) + \frac{1}{2}\|x-y\|_{U}^2\big),
\end{equation}
and 
 \[
\prox_{f}\colon\HH\to\HH \colon x\mapsto\underset{y\in\HH}{\argmin}\: 
\big( f(y) + \frac{1}{2}\|x-y\|^2\big).
\]
It holds $\prox_{f}^U=J_{U^{-1}\partial f}$, and $\prox_{f}= J_{\partial f}$
coincides with the classical definition of proximity operator in \cite{Mor62}.
Moreover, let $x\in\HH$ and set $p=\prox_f^U x$. Then
\begin{equation}
\label{e:up}
(\forall y\in\HH)\quad f(p)-f(y) \leq \scal{y-p}{U(p-x)}.
\end{equation}
The  conjugate function of $f$ is 
\[
 f^*\colon a\mapsto \sup_{x\in\HH}\big(\scal{a}{x}-f(x)\big).
\]
Note that, 
\[
 (\forall f\in\Gamma_0(\HH))(\forall (x,y)\in\HH^2)\quad y\in\partial f(x) \Leftrightarrow x\in\partial f^*(y),
\]
or equivalently,
\begin{equation}
\label{e:equi}
 (\forall f\in\Gamma_0(\HH))\quad (\partial f)^{-1} = \partial f^*.
\end{equation}
 The strong relative interior of a subset $C$ of $\HH$, denoted by $\sri C$, is 
the set of points $x\in C$ such that the cone generated by 
$-x + C$ is a closed vector subspace of $\HH$. We refer to \cite{livre1} for an account of the main 
results of convex analysis and monotone operator theory.

Let $(\Omega, \FF,\mathsf{P})$ be a probability space.
A $\HH$-valued random variable is a measurable (strong and weak measurability
coincide since $\HH$ is separable) function $X\colon \Omega\to\HH$, 
where $\HH$ is endowed with the Borel $\sigma$-algebra. We denote by 
$\sigma(X)$ the  $\sigma$-field generated by $X$.
The expectation of a random variable $X$ is denoted by $\E[X]$. The conditional expectation
of $X$ given a $\sigma$-field $\EuScript{A}\subset \EuScript{F}$ is denoted by 
$\E[X|\EuScript{A}]$. Given a random variable $Y\colon\Omega\to\HH$,
the conditional expectation of $X$ given $Y$ is denoted by $\E[X|Y]$. See \cite{LedTal91} for more
details on probability theory in Hilbert spaces.  A $\HH$-valued random process is a sequence $(x_n)_{n\in\NN}$ 
of $\HH$-valued random variables.
The abbreviation a.s. stands for ``almost surely''.

\begin{lemma}{\rm\cite[Theorem 1]{Rob85}}\label{l:rob85}
Let $(\FF_n)_{n\in\NN}$ be an increasing sequence of  sub-$\sigma$-algebras of ${\EuScript{F}}$,
let $(z_n)_{n\in\NN}$, $(\xi_n)_{n\in\NN}$, $(\zeta_n)_{n\in\NN}$ and $(t_n)_{n\in\NN}$ be 
$\RP$-valued random sequences such that,  for every $n\in\NN$, $z_n$, $\xi_n$, $\zeta_n$,
and $t_n$  are $\FF_n$-measurable. Assume moreover that  $\sum_{n\in\NN}t_n<+\infty$,
$\sum_{n\in\NN} \zeta_n<+\infty$ a.s.,  and 
\begin{equation}
(\forall n\in\NN)\quad \E[z_{n+1}|\FF_n] \leq (1+t_n)z_n + \zeta_n-\xi_n 
\quad \text{a.s.}.
\end{equation}
Then $(z_n)_{n\in\NN}$ converges a.s. and $(\xi_n)_{n\in\NN}$ is summable a.s..
\end{lemma}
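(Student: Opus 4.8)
\emph{Proof (sketch).} This is the Robbins--Siegmund almost-supermartingale lemma, and I would obtain it by reducing it to Doob's convergence theorem for supermartingales that are bounded from below; the only genuinely delicate point is that the hypothesis $\sum_n\zeta_n<+\infty$ is merely almost sure rather than in expectation, which forces a localisation (stopping-time) argument.

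First I would absorb the multiplicative factor $1+t_n$. Since $\sum_{n\in\NN}t_n<+\infty$, the partial products $a_n:=\prod_{k=0}^{n-1}(1+t_k)^{-1}$ form a nonincreasing, $\FF_{n-1}$-measurable sequence in $\left]0,1\right]$ with a strictly positive limit $a_\infty\in\left]0,1\right]$. Multiplying the recursion by the $\FF_n$-measurable factor $a_{n+1}$ and using $a_{n+1}(1+t_n)=a_n$ gives
\[
\E[a_{n+1}z_{n+1}\,|\,\FF_n]\leq a_nz_n+a_{n+1}\zeta_n-a_{n+1}\xi_n\quad\text{a.s.},
\]
so that the process $W_n:=a_nz_n+\sum_{k=0}^{n-1}a_{k+1}\xi_k-\sum_{k=0}^{n-1}a_{k+1}\zeta_k$ is a supermartingale, $\E[W_{n+1}\,|\,\FF_n]\leq W_n$, and, because $z_n\geq 0$ and $a_{k+1}\leq 1$, obeys $W_n\geq-\sum_{k=0}^{n-1}\zeta_k$.

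To handle the fact that $\sum_k\zeta_k$ is only almost surely finite, I would, for each $N\in\NN$, introduce the stopping time $\tau_N:=\inf\{n\in\NN : \sum_{k=0}^n\zeta_k>N\}$; this is indeed an $(\FF_n)$-stopping time because $\sum_{k=0}^n\zeta_k$ is $\FF_n$-measurable, and $\tau_N\uparrow+\infty$ a.s. since $\sum_k\zeta_k<+\infty$ a.s. The stopped process $W_{n\wedge\tau_N}$ is still a supermartingale (optional stopping), and now $W_{n\wedge\tau_N}\geq-\sum_{k<n\wedge\tau_N}\zeta_k\geq-N$ by the very definition of $\tau_N$. Doob's supermartingale convergence theorem then yields that $W_{n\wedge\tau_N}$ converges a.s. From here I would read off the conclusions on the stopped process and let $N\to\infty$: the partial sums $\sum_{k<n\wedge\tau_N}a_{k+1}\zeta_k$ are nondecreasing and bounded by $N$, hence converge a.s., so $a_{n\wedge\tau_N}z_{n\wedge\tau_N}+\sum_{k<n\wedge\tau_N}a_{k+1}\xi_k$ converges a.s.; being the sum of a nonnegative term and a nondecreasing one, the nondecreasing part $\sum_{k<n\wedge\tau_N}a_{k+1}\xi_k$ converges as well, and since $a_{k+1}\geq a_\infty>0$ this gives $\sum_k\xi_k\mathbf{1}_{\{k<\tau_N\}}<+\infty$ a.s.; consequently $a_{n\wedge\tau_N}z_{n\wedge\tau_N}$ converges a.s., and dividing by $a_{n\wedge\tau_N}$, whose limit is $\geq a_\infty>0$, shows that $z_{n\wedge\tau_N}$ converges a.s. On the event $\{\tau_N=+\infty\}$ the stopped and original sequences coincide, so $(z_n)_{n\in\NN}$ converges and $(\xi_n)_{n\in\NN}$ is summable there; since $\mathsf{P}\big(\bigcup_{N\in\NN}\{\tau_N=+\infty\}\big)=1$, the assertion follows.

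I expect the main obstacle to be precisely this localisation together with the transfer of conclusions: setting up the stopping times so that the truncated error sequences become deterministically bounded, checking that stopping preserves both the recursion and the supermartingale property, and then correctly transporting the almost-sure conclusions from each stopped process back to the original one. Once that scaffolding is in place, the remaining steps — forming $W_n$ and invoking Doob's supermartingale convergence theorem — are routine.
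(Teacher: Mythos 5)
The paper does not prove this lemma: it is quoted directly from Robbins and Siegmund \cite{Rob85}, so there is no in-paper argument to compare against. Your sketch is correct and is essentially the classical proof from that reference: normalize by the partial products of $(1+t_k)$ to remove the multiplicative factor, form the compensated process $W_n$, localize with the stopping times $\tau_N$ so that the stopped supermartingale is bounded below by $-N$, invoke Doob's supermartingale convergence theorem, and transfer the conclusions back to the original sequences on $\bigcup_{N\in\NN}\{\tau_N=+\infty\}$, which has full measure since $\sum_{n}\zeta_n<+\infty$ a.s. The one point you leave implicit is the integrability of $z_n$ (the statement only guarantees nonnegativity, so one must either assume $z_0\in L^1$ and propagate integrability through the recursion, or add a further truncation as in the original paper) before speaking of a genuine supermartingale; this is a routine technicality and does not affect the validity of the argument.
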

The following lemma is a special case of \cite[Proposition 2.4]{Vu15}.
\begin{lemma}
\label{p:fejer}
Let $C$ be a non-empty closed subset of $\HH$,  
let $\alpha\in \left]0,\infty\right[$, let $W\in \BL(\HH)$ and $(W_n)_{n\in\NN}\subset \BL(\HH)$ be  
self-adjoint and strongly positive operators with constant $\alpha$,
such that $W_n\to W$ pointwise, and
let  $(x_n)_{n\in\NN}$ be a  $\HH$-valued random process. 
Suppose that, for every $x\in C$, there exist $\RP$-valued random 
sequences $(\xi_n(x))_{n\in\NN}$, $(\zeta_n(x))_{n\in\NN}$ and $(t_n(x))_{n\in\NN}$
such that,  for every $n\in\NN$, $\xi_n(x)$, $\zeta_n(x)$ and $t_n(x)$
are ${\EuScript{F}}_n$-measurable, $(\zeta_n(x))_{n\in\NN}$ and $(t_n(x))_{n\in\NN}$ are summable a.s.,
and\begin{equation}
 \label{eq:rob}
(\forall n\in\NN)\quad \E[\|x_{n+1}-x\|_{W_{n+1}}|\FF_n] \leq (1+t_n(x))\|x_{n}-x\|_{W_n}+ \zeta_n(x)-\xi_n(x)
\quad \quad \text{a.s.}
\end{equation}
Then the following hold.
\begin{enumerate}
\item 
\label{p:fejerii} $(x_n)_{n\in\NN}$ is bounded a.s. and $(\xi_n(x))_{n\in\NN}$ is summable a.s.
\item 
\label{p:fejeri}  There exists $\widetilde{\Omega}\subset \Omega$ such that $\PPP(\widetilde{\Omega})=1$ 
and, for every $\omega\in\widetilde{\Omega}$ and $x\in C$,
 $( \|x_{n}(\omega)-x\|)_{n\in\NN}$ converges.
\item 
\label{p:fejeriv}
Suppose that the set of weak cluster points of $(x_n)_{n\in\NN}$ is a subset of $C$ a.s. Then
$(x_n)_{n\in\NN}$ converges weakly a.s. to a $C$-valued random vector.
\end{enumerate}
\end{lemma}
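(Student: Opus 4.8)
The plan is to read the hypothesis \eqref{eq:rob} as a Robbins--Siegmund inequality applied at each point of $C$, use Lemma~\ref{l:rob85} to obtain the almost-sure conclusions pointwise in $x$, and then glue these into a single full-measure event by exploiting separability of $\HH$; the weak convergence in \ref{p:fejeriv} will follow from an Opial-type uniqueness argument. Concretely, fix $x\in C$ and put $z_n:=\|x_n-x\|_{W_n}$. Since $x_n$ is $\FF_n$-measurable and $W_n$ is deterministic, $z_n$ is a nonnegative $\FF_n$-measurable random variable, and \eqref{eq:rob} is exactly the hypothesis of Lemma~\ref{l:rob85} for the sequences $(z_n)$, $(t_n(x))$, $(\zeta_n(x))$, $(\xi_n(x))$; hence there is an event $\Omega_x$ with $\PPP(\Omega_x)=1$ on which $(\|x_n-x\|_{W_n})_n$ converges and $(\xi_n(x))_n$ is summable. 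As each $W_n$ is strongly positive with constant $\alpha$ we have $\sqrt{\alpha}\,\|x_n-x\|\le\|x_n-x\|_{W_n}$, and a convergent real sequence is bounded, so $(x_n)$ is bounded on $\Omega_x$; this proves \ref{p:fejerii}.

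For \ref{p:fejeri} the point is to produce one null set that serves every $x\in C$ simultaneously. Since $\HH$ is separable, so is $C$; I would pick a countable dense subset $D=\{x^{(k)}\}_{k\in\NN}$ of $C$ and set $\widetilde\Omega:=\bigcap_{k\in\NN}\Omega_{x^{(k)}}$, which still has probability $1$. On $\widetilde\Omega$ the process $(x_n)$ is bounded and $(\|x_n-x^{(k)}\|_{W_n})_n$ converges for every $k$. By Banach--Steinhaus the pointwise convergent family $(W_n)$ satisfies $\mu:=\sup_n\|W_n\|<\infty$, so $\sqrt{\alpha}\,\|\cdot\|\le\|\cdot\|_{W_n}\le\sqrt{\mu}\,\|\cdot\|$ uniformly in $n$; combining this uniform equivalence with $W_n\to W$ pointwise and the boundedness of $(x_n(\omega))$ --- the variable-metric ingredient of \cite{Vu15} --- upgrades the convergence of $(\|x_n(\omega)-x^{(k)}\|_{W_n})_n$ to convergence of $(\|x_n(\omega)-x^{(k)}\|)_n$ for every $k$. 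Finally, the maps $z\mapsto\|x_n(\omega)-z\|$ are $1$-Lipschitz uniformly in $n$, so a standard density argument transfers this convergence from $D$ to all of $C$, which is \ref{p:fejeri}.

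For \ref{p:fejeriv} I would work on the intersection of $\widetilde\Omega$ with the full-measure event on which every weak sequential cluster point of $(x_n)$ lies in $C$, and fix $\omega$ there. Being bounded, $(x_n(\omega))$ has weak cluster points, all in $C$; if $x'$ and $x''$ are two of them, realized along subsequences, then passing to the limit along each subsequence in the identity
\[
\|x_n(\omega)-x''\|^2-\|x_n(\omega)-x'\|^2=2\scal{x_n(\omega)}{x'-x''}+\|x''\|^2-\|x'\|^2 ,
\]
whose left-hand side converges by \ref{p:fejeri} because $x',x''\in C$, forces $\scal{x'}{x'-x''}=\scal{x''}{x'-x''}$ and hence $x'=x''$. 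Thus $(x_n(\omega))$ has a unique weak cluster point and converges weakly to it, and $\omega\mapsto\lim_n x_n(\omega)$, being an almost-sure pointwise weak limit of $\HH$-valued random variables in a separable space, is an $\HH$-valued random variable with values in $C$; this is \ref{p:fejeriv}.

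The main obstacle is \ref{p:fejeri}: handling all $x\in C$ at once requires both assembling a single almost-sure event (settled by separability of $\HH$) and, on it, passing from the moving norms $\|\cdot\|_{W_n}$ to the fixed norm $\|\cdot\|$. This last step is precisely where $W_n\to W$ pointwise and $\sup_n\|W_n\|<\infty$ come in, and it is the one genuinely variable-metric point, inherited from \cite{Vu15}; by contrast \ref{p:fejerii} and \ref{p:fejeriv} are immediate from Lemma~\ref{l:rob85}, strong positivity, and the Opial identity, respectively.
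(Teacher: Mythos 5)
The paper offers no proof of this lemma at all --- it is quoted verbatim (modulo one detail, see below) as a special case of \cite[Proposition 2.4]{Vu15} --- so your argument can only be judged on its own terms. Its architecture (Robbins--Siegmund applied pointwise in $x$ via Lemma~\ref{l:rob85}, a countable dense subset of $C$ to assemble a single full-measure event, an Opial-type identity to force uniqueness of weak cluster points) is exactly the standard route from the variable-metric Fej\'er literature, and part \ref{p:fejerii} is correct as you wrote it.

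The genuine gap is the ``upgrade'' step in \ref{p:fejeri}. Banach--Steinhaus does give $\mu:=\sup_n\|W_n\|<\infty$ and the uniform bounds $\sqrt{\alpha}\,\|\cdot\|\le\|\cdot\|_{W_n}\le\sqrt{\mu}\,\|\cdot\|$, but uniform equivalence of norms plus pointwise convergence $W_n\to W$ plus boundedness of $(x_n(\omega))$ does \emph{not} yield $\|x_n(\omega)-x\|_{W_n}-\|x_n(\omega)-x\|_{W}\to 0$: strong operator convergence cannot be evaluated along a merely bounded, non-convergent sequence. In $\ell^2$ with orthonormal basis $(e_n)$, take $W_n=\Id+\scal{\cdot}{e_n}e_n$; then $W_n\to\Id$ pointwise with $\Id\preccurlyeq W_n\preccurlyeq 2\Id$, yet $\|e_n\|_{W_n}^2=2$ while $\|e_n\|^2=1$. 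Worse, the deterministic sequence $x_{2n}=e_{2n}$, $x_{2n+1}=\sqrt{2}\,e_1$ satisfies \eqref{eq:rob} for $C=\{0\}$ with $t_n=\zeta_n=\xi_n=0$ (since $\|x_n\|_{W_n}\equiv\sqrt{2}$), while $\|x_n\|$ oscillates between $1$ and $\sqrt{2}$; so conclusion \ref{p:fejeri} is not provable with the plain norm --- in the cited source the convergent quantity is $\|x_n(\omega)-x\|_{W_n}$, and the subscript has been dropped in this paper's transcription. Your part \ref{p:fejeriv} inherits the gap, because your Opial identity needs convergence of $\|x_n(\omega)-z\|$ for $z\in C$. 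The repair is to stay in the moving metric throughout: the identity
\[
\|x_n-x''\|_{W_n}^2-\|x_n-x'\|_{W_n}^2=2\scal{x_n}{W_n(x'-x'')}+\scal{W_nx''}{x''}-\scal{W_nx'}{x'}
\]
has a convergent left-hand side by the $W_n$-version of \ref{p:fejeri}, while on the right $W_n$ acts only on the \emph{fixed} vectors $x'-x''$, $x'$, $x''$, where pointwise convergence is legitimate; hence $\scal{x_n}{W_n(x'-x'')}=\scal{x_n}{W(x'-x'')}+o(1)$ by boundedness of $(x_n)$, and comparing the limits along the two weakly convergent subsequences gives $\scal{x'-x''}{W(x'-x'')}=0$, so $x'=x''$ by strong positivity of $W$. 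With that substitution (and the corresponding $\sqrt{\mu}$-Lipschitz density argument over $D$, which you already have the ingredients for), your proof goes through.
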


\begin{lemma}{\rm \cite[Lemma 3.7]{optim2}}
\label{l:maxmon45}
Let $A\colon\HH \to 2^{\HH}$ be maximally monotone, 
 let $U\in\BL(\HH)$ be self-adjoint and strongly positive, and let 
$\GG$ be the real Hilbert space obtained by endowing $\HH$ with 
the scalar product 
$(x,y)\mapsto\scal{x}{y}_{U^{-1}}$.
Then, the following hold.
\begin{enumerate}
\item
\label{l:maxmon45i}
$UA\colon\GG \to 2^{\GG}$ is maximally monotone.
\item
\label{l:maxmon45ii}
$J_{UA}\colon\GG\to\GG$ is firmly nonexpansive.
\end{enumerate}
\end{lemma}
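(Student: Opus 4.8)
The plan is to establish (i) first, and then deduce (ii) either from (i) and standard facts or by a direct one-line computation. Throughout, I would work with the identity $\scal{x}{y}_\GG=\scal{x}{y}_{U^{-1}}=\scal{U^{-1}x}{y}$, where $\scal{\cdot}{\cdot}$ is the original scalar product of $\HH$; note first that, since $U$ is self-adjoint and strongly positive, $U$ is invertible with $U^{-1}\in\BL(\HH)$ again self-adjoint and strongly positive, so $\GG$ is a genuine Hilbert space whose underlying vector space and topology coincide with those of $\HH$ (in particular weak convergence in $\GG$ and in $\HH$ agree).

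For monotonicity of $UA$ on $\GG$: given $(x,u)$ and $(y,v)$ in $\gra(UA)$, by definition $U^{-1}u\in Ax$ and $U^{-1}v\in Ay$, so monotonicity of $A$ on $\HH$ gives $\scal{x-y}{U^{-1}u-U^{-1}v}\geq 0$, and since $\scal{x-y}{U^{-1}(u-v)}=\scal{x-y}{u-v}_{U^{-1}}=\scal{x-y}{u-v}_\GG$, the operator $UA$ is monotone on $\GG$. For maximality I would use the standard characterization that a monotone operator $M$ on a Hilbert space is maximally monotone iff every pair monotonically related to $\gra M$ already lies in $\gra M$: let $(x,u)\in\HH\times\HH$ satisfy $\scal{x-y}{u-v}_\GG\geq 0$ for all $(y,v)\in\gra(UA)$. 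The assignment $(y,v)\mapsto(y,U^{-1}v)$ is a bijection of $\gra(UA)$ onto $\gra A$, and the assumed inequality reads $\scal{x-y}{U^{-1}u-U^{-1}v}\geq 0$, i.e. $\scal{x-y}{U^{-1}u-w}\geq 0$ for all $(y,w)\in\gra A$. Maximal monotonicity of $A$ then forces $(x,U^{-1}u)\in\gra A$, that is $(x,u)\in\gra(UA)$; hence $UA$ is maximally monotone on $\GG$, which is (i).

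For (ii): since $UA$ is maximally monotone on $\GG$ by (i), the preliminaries already give that $J_{UA}$ is single-valued with $\dom J_{UA}=\GG$ and firmly nonexpansive on $\GG$; but for self-containedness I would verify the defining inequality directly. Let $x,y\in\GG$, $p=J_{UA}x$, $q=J_{UA}y$. Then $x-p\in UAp$ and $y-q\in UAq$, so $U^{-1}(x-p)\in Ap$ and $U^{-1}(y-q)\in Aq$; monotonicity of $A$ yields $\scal{p-q}{U^{-1}(x-p)-U^{-1}(y-q)}\geq 0$, and rewriting the left-hand side as $\scal{p-q}{(x-y)-(p-q)}_{U^{-1}}=\scal{x-y}{p-q}_\GG-\|p-q\|_\GG^2$ gives $\|p-q\|_\GG^2\leq\scal{x-y}{p-q}_\GG$, which is firm nonexpansiveness in $\GG$.

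The computations are all short; the one place that needs a moment's care is the maximality argument, where the point is precisely that conjugating by the bijection $U^{-1}$ converts "monotonically related to $\gra(UA)$ in the $\GG$-inner product" into "monotonically related to $\gra A$ in the original inner product" — the factor $U^{-1}$ in the new metric exactly compensates the factor $U$ in $UA$ — so that maximality of $A$ transfers verbatim.
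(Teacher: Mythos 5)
Your argument is correct and complete. Note first that the paper does not prove this lemma at all: it is imported verbatim as \cite[Lemma 3.7]{optim2}, so there is no in-paper proof to compare against. Your verification of monotonicity of $UA$ in $\GG$ (via $\scal{x-y}{u-v}_{U^{-1}}=\scal{x-y}{U^{-1}u-U^{-1}v}$) and your direct computation of firm nonexpansiveness of $J_{UA}$ are both exactly the standard calculations and are carried out without error; the preliminary observation that $U^{-1}$ is again self-adjoint and strongly positive, so that $\GG$ is a genuine Hilbert space with the same topology, is the right thing to record. The one place where your route differs from the usual proof of this fact (as in the cited reference) is the maximality step: the standard argument goes through Minty's theorem, writing $\Id+UA=U(U^{-1}+A)$, noting that $U^{-1}+A$ is maximally and strongly monotone hence surjective, and concluding $\ran(\Id_{\GG}+UA)=\GG$; you instead use the characterization of maximality via monotonically related pairs together with the graph bijection $(y,v)\mapsto(y,U^{-1}v)$, under which $\GG$-monotone-relatedness to $\gra(UA)$ becomes $\HH$-monotone-relatedness to $\gra A$. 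Your route is slightly more elementary (it needs neither the sum theorem for maximally monotone operators nor surjectivity of strongly monotone operators), at the cost of invoking the equivalence between maximality and the absence of properly monotonically related pairs, which is essentially a restatement of the definition and is available in \cite{livre1}. Either way the conclusion is sound, and part (ii) is correctly reduced to (i) for the domain statement with the firm nonexpansiveness inequality checked by hand.
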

\section{Algorithm and almost sure convergence}
\label{sec:main}
In this section we state our main results.
We introduce the extension to monotone inclusions of 
the primal-dual algorithm in \cite{DroSabTeb15,LorVer11}, allowing for stochastic errors
in the evaluation of the operator $B$ and we prove
 almost sure weak convergence of the iterates.
\begin{algorithm}
\label{algo:1}
Let $(\gamma_n)_{n\in\NN}$ and $(\tau_n)_{n\in\NN}$ be sequences 
of strictly positive real numbers,
let $U$ be a self adjoint positive definite on $\GG$,
let $(r_{n})_{n\in\NN}$  be  a $\HH$-valued, square integrable random process,
let $x_{0}$ be  a $\HH$-valued, squared integrable random variable and 
let $v_{0}$ be  a $\GG$-valued, squared integrable random variable.
Iterate
\begin{equation}
\label{e:Tsengaa}
(\forall n\in\NN)\quad
\begin{array}{l}
\left\lfloor
\begin{array}{l}
p_{n} = P_V(x_{n} - \gamma_n(L^* v_{n} + r_{n}))\\
v_{n+1} = J_{\frac{\tau_n}{\gamma_n}UA^{-1}}\big(v_{n}+ \frac{\tau_n}{\gamma_n}U Lp_{n}\big)\\
x_{n+1} = P_V( x_{n} - \gamma_n(L^*v_{n+1} + r_{n})).\\
\end{array}
\right.\\[2mm]
\end{array}
\end{equation}
\end{algorithm}
{\bf Almost sure convergence.} 
We first establish almost sure convergence of the iterates generated by Algorithm \eqref{algo:1} under suitable conditions 
on the parameters $(\gamma_n)_{n\in\NN}$ and $(\tau_n)_{n\in\NN}$  as well as on the stochastic estimates of $B$.
\begin{theorem} 
\label{t:1}
In the setting of Problem~\ref{p:prob}, suppose that $\mathcal{P}$ is non empty and consider
algorithm~\ref{algo:1}.
Assume that the following conditions are satisfied for $\FF_n = \sigma((x_k,v_k)_{0\leq k\leq n})$
\begin{enumerate}
\item
\label{con:t:1i} $(\gamma_n)_{n\in\NN}$ is decreasing and $(\tau_n)_{n\in\NN}$ is increasing 
\item 
\label{con:t:1ii}
$(\exists \tau\in\left]0,+\infty\right[)$ such that $\sup_{n\in\NN} \tau_n\leq \tau$, and $(\tau U)^{-1}- LP_VL^*$ is positive definite.
\item 
\label{con:t:1iii}
$\gamma_0\in\left]0,\beta\right[$ and $\inf_{n\in\NN} \gamma_n >0$.
\item 
\label{con:t:1iv}$(\forall n\in\NN) \quad  \E[r_{n}|\FF_n] = Bx_n$.
\item
\label{con:t:1v} 
$\sum_{n\in \NN} \E[\|r_{n}-Bx_{n}\|^2 |\FF_n] < +\infty $  \quad \text{$\mathsf{P}$-a.s.}
\end{enumerate}
Then the following hold for some random vector $(x,v)$, $\mathcal{P}\times\mathcal{D}$-valued a.s.
\begin{enumerate}
\item $(x_n)_{n\in\NN}$ converges weakly to $x$ and $ (v_n)_{n\in\NN}$ converges weakly to $v$ a.s.
\item $\sum_{n\in\NN}\|Bx_n-Bx\|^2 < +\infty$ a.s.
\end{enumerate}
\end{theorem}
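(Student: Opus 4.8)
\emph{Plan.} Work in $\HH\times\GG$ and compare the iterates to the coupled solution set
$$Z=\big\{(\bar x,\bar v)\in V\times\GG\ :\ \bar v\in A(L\bar x),\ -B\bar x-L^*\bar v\in N_V\bar x\big\}.$$
Since $P_V=J_{N_V}$ is a linear projection, $p_n,x_n\in V$ for $n\geq 1$, and subtracting the two $P_V$-steps of \eqref{e:Tsengaa} yields the identity $p_n-x_{n+1}=\gamma_n P_VL^*(v_{n+1}-v_n)$. The set $Z$ is closed, since $Z=\zer\mathfrak M$ for the maximally monotone operator $\mathfrak M\colon(x,v)\mapsto(Bx+N_Vx+L^*v,\,-Lx+A^{-1}v)$ (a sum of the cocoercive $B\oplus 0$, the product $N_V\times A^{-1}$, and a bounded skew-adjoint operator); it is non-empty because $\mathcal P\neq\emptyset$ furnishes some $\bar x$ together with a dual $\bar v\in A(L\bar x)$ with $-B\bar x-L^*\bar v\in N_V\bar x$; and $Z\subset\mathcal P\times\mathcal D$. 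Put $K=LP_VL^*\succeq0$ and $G_n=\tfrac1{\tau_n}U^{-1}-K$; by \ref{con:t:1ii} and $\tau_n\leq\tau$ one has $G_n\succeq(\tau U)^{-1}-K\succ0$ uniformly, while $(\tau_n)$ increasing gives $G_{n+1}\preceq G_n$; since in addition $(\gamma_n)$ decreases and $(\gamma_n),(\tau_n)$ converge (monotone and bounded by \ref{con:t:1i}--\ref{con:t:1iii}), the scalings $\gamma_n^2G_n$ are \emph{decreasing} in $n$ (smaller positive scalar times smaller positive operator) and converge in norm to some $\gamma_\infty^2G_\infty\succ0$. Hence $W_n\colon(x,v)\mapsto(x,\gamma_n^2G_nv)$ are self-adjoint, uniformly strongly positive, and $W_n\to W$ pointwise.

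The core is a pathwise estimate for $\Psi_n:=\|x_n-\bar x\|^2+\gamma_n^2\|v_n-\bar v\|_{G_n}^2=\|(x_n,v_n)-(\bar x,\bar v)\|_{W_n}^2$, with $(\bar x,\bar v)\in Z$ fixed. The Pythagoras identity for the projection onto the subspace $V$ gives
$$\|x_{n+1}-\bar x\|^2=\|x_n-\bar x\|^2-\|x_{n+1}-x_n\|^2-2\gamma_n\scal{L^*v_{n+1}+r_n}{x_{n+1}-\bar x}.$$
Writing $r_n=Bx_n+(r_n-Bx_n)$, in the term $\scal{L^*v_{n+1}+Bx_n}{x_{n+1}-\bar x}$ I use $\beta$-cocoercivity of $B$, the relation $-B\bar x-L^*\bar v\in N_V\bar x=V^\perp$ with $x_{n+1}-\bar x\in V$, and a Young inequality with parameter $s\in(\gamma_0,2\beta)$ (admissible since $\gamma_0<\beta$) to extract $-(1-\gamma_n/s)\|x_{n+1}-x_n\|^2$, $-\gamma_n(2\beta-s)\|Bx_n-B\bar x\|^2$, and $-2\gamma_n\scal{v_{n+1}-\bar v}{L(x_{n+1}-\bar x)}$. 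For this last term I substitute $Lx_{n+1}=Lp_n-\gamma_nK(v_{n+1}-v_n)$, apply monotonicity of $A^{-1}$ at $\big(v_{n+1},\tfrac1{\sigma_n}U^{-1}(v_n-v_{n+1})+Lp_n\big)$ and $(\bar v,L\bar x)$ (with $\sigma_n=\tau_n/\gamma_n$), and expand the resulting inner products via $2\scal{a}{b}_H=\|a\|_H^2+\|b\|_H^2-\|a-b\|_H^2$; this produces exactly $-\gamma_n^2\|v_{n+1}-\bar v\|_{G_n}^2+\gamma_n^2\|v_n-\bar v\|_{G_n}^2-\gamma_n^2\|v_{n+1}-v_n\|_{G_n}^2$, so that the cross term is annihilated by the dual monotonicity. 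Invoking $\gamma_n^2G_n\succeq\gamma_{n+1}^2G_{n+1}$ then gives the pathwise inequality
$$\Psi_{n+1}\leq\Psi_n-(1-\tfrac{\gamma_n}{s})\|x_{n+1}-x_n\|^2-\gamma_n^2\|v_{n+1}-v_n\|_{G_n}^2-\gamma_n(2\beta-s)\|Bx_n-B\bar x\|^2-2\gamma_n\scal{r_n-Bx_n}{x_{n+1}-\bar x}.$$

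Now take $\E[\,\cdot\,|\FF_n]$. All subtracted quadratic terms are either $\FF_n$-measurable (the $\|Bx_n-B\bar x\|^2$ one) or nonnegative, and only the last inner product needs care: let $\hat x_{n+1}$ be the primal iterate obtained by running one step of \eqref{e:Tsengaa} from $(x_n,v_n)$ with $r_n$ replaced by $Bx_n$; it is $\FF_n$-measurable, $\E[\scal{r_n-Bx_n}{\hat x_{n+1}-\bar x}|\FF_n]=0$ by \ref{con:t:1iv}, and nonexpansiveness of $P_V$ and of the resolvent in the $U^{-1}$-metric (Lemma~\ref{l:maxmon45}) together with boundedness of $L$ and $U$ give $\|x_{n+1}-\hat x_{n+1}\|\leq C\|r_n-Bx_n\|$ for a constant $C$ depending only on $\gamma_0,\tau,\|L\|,\|U\|$; hence by Cauchy--Schwarz $-2\gamma_n\E[\scal{r_n-Bx_n}{x_{n+1}-\bar x}|\FF_n]\leq\zeta_n:=2\gamma_0 C\,\E[\|r_n-Bx_n\|^2|\FF_n]$, which is summable a.s.\ by \ref{con:t:1v}. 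Thus $\E[\Psi_{n+1}|\FF_n]\leq\Psi_n+\zeta_n-\xi_n$ with $\xi_n\geq0$, and Lemma~\ref{l:rob85} (equivalently Lemma~\ref{p:fejer}\eqref{p:fejerii}) gives: $\Psi_n$ converges a.s., $\sum_n\xi_n<+\infty$ a.s.; since $\inf_n\gamma_n>0$ and $G_n\succeq(\tau U)^{-1}-K\succ0$, this produces $\sum_n\|Bx_n-B\bar x\|^2<+\infty$, $\sum_n\|x_{n+1}-x_n\|^2<+\infty$ and $\sum_n\|v_{n+1}-v_n\|^2<+\infty$ a.s., and $(x_n,v_n)$ is bounded a.s.

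Finally, the first and third steps of \eqref{e:Tsengaa} rewritten as inclusions into $N_V$, the second rewritten into $A^{-1}$, and $Lp_n=Lx_{n+1}+\gamma_nK(v_{n+1}-v_n)$, show that
$$\mathfrak M(x_{n+1},v_{n+1})\ni\Big((Bx_{n+1}-Bx_n)+(Bx_n-r_n)+\tfrac1{\gamma_n}(x_n-x_{n+1}),\ \tfrac1{\sigma_n}U^{-1}(v_n-v_{n+1})+\gamma_nK(v_{n+1}-v_n)\Big),$$
and the right-hand side tends to $0$ strongly a.s.\ (use $\|Bx_{n+1}-Bx_n\|\leq\beta^{-1}\|x_{n+1}-x_n\|$, $r_n-Bx_n\to0$ a.s.\ from \ref{con:t:1v}, $\inf_n\gamma_n>0$, $\sup_n\sigma_n^{-1}<+\infty$, and $v_{n+1}-v_n\to0$). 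Since $x_{n+1}-x_n\to0$ and $v_{n+1}-v_n\to0$, every weak cluster point of $(x_n,v_n)$ is one of $(x_{n+1},v_{n+1})$, hence lies in $\zer\mathfrak M=Z$ by weak--strong closedness of $\gra\mathfrak M$; combined with a.s.\ convergence of $\|(x_n,v_n)-z\|_W$ for each $z\in Z$ (from $\Psi_n$ converging, $W_n\to W$, and boundedness), Lemma~\ref{p:fejer}\eqref{p:fejeriv} gives $(x_n,v_n)\weakly(x,v)$ a.s.\ for a $Z$-valued, hence $\mathcal P\times\mathcal D$-valued, random vector, which is claim~(i). For claim~(ii), apply $\sum_n\|Bx_n-B\bar x\|^2<+\infty$ at a \emph{deterministic} $(\bar x,\bar v)\in Z$ (exists since $\mathcal P\neq\emptyset$): then $Bx_n\to B\bar x$ strongly, and since $x_n\weakly x$ and $B$ is maximally monotone, weak--strong closedness of $\gra B$ forces $Bx=B\bar x$, so $\sum_n\|Bx_n-Bx\|^2<+\infty$ a.s. The main obstacle is isolating the right Lyapunov function $\Psi_n$ — that the primal variable enters through a plain norm while the dual variable carries the $n$-dependent metric $\gamma_n^2G_n$, and that after substituting $p_n-x_{n+1}=\gamma_nP_VL^*(v_{n+1}-v_n)$ the primal/dual cross term is cancelled by the monotonicity of $A^{-1}$, so that condition~\ref{con:t:1i} is exactly what makes $\gamma_n^2G_n$ decreasing; the remaining work is the stochastic bookkeeping via $\hat x_{n+1}$, and routine monotone-operator theory.
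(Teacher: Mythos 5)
Your proposal is correct and follows essentially the same route as the paper: the same variable-metric Lyapunov quantity $\|x_n-\overline{x}\|^2+\gamma_n^2\|v_n-\overline{v}\|^2_{(\tau_nU)^{-1}-LP_VL^*}$, the same auxiliary ``exact'' iterate to kill the conditional expectation of the error cross-term, the same stochastic quasi-Fej\'er lemma, and the same demiclosedness argument for the weak cluster points. The only differences are cosmetic (monotonicity of $A^{-1}$ in place of firm nonexpansiveness of its resolvent, a two-block operator $\mathfrak M$ on $\HH\times\GG$ instead of the paper's three-block one, and passing to the limit in the inclusion for the actual rather than the auxiliary iterates).
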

\begin{proof}
Let $\boldsymbol{\mathcal{S}}$ be the set of all points $(\overline{x},\overline{v})\in\HH\times\GG$ such that
\begin{equation}
\label{e:kt}
-L^*\overline{v}\in B\overline{x} + N_V\overline{x}
\quad
\text{and}
\quad
L\overline{x}\in A^{-1}\overline{v}.
\end{equation}
Then $\boldsymbol{\mathcal{S}}$ is non empty  and it is a closed convex subset of $\HH\times\GG$ \cite[Proposition 2.8]{siop2} contained in $\mathcal{P}\times\mathcal{D}$.
Since $A^{-1}$ is maximally monotone,  it follows from Lemma~\ref{l:maxmon45} that $(\tau_n/\gamma_n)UA^{-1}$ is maximally monotone with respect to 
$\scal{\cdot}{\cdot}_{(\gamma_n/\tau_n)U^{-1}}$. Hence, $J_{(\tau_n/\gamma_n)UA^{-1}}$ is firmly nonexpansive with respect to 
the norm $\| \cdot\|_{(\gamma_n/\tau_n)U^{-1}}$. Moreover, we also derive from \eqref{e:kt} that 
\begin{equation}
(\forall n\in\NN)\qquad \overline{v} = J_{\frac{\tau_n}{\gamma_n}UA^{-1}}\Big(\overline{v} +\frac{\tau_n}{\gamma_n} UL\overline{x}\Big).
\end{equation}
Therefore,
\begin{alignat}{2}
\label{est:1}
\gamma_n\| v_{n+1}-&\overline{v}\|_{(\tau_nU)^{-1}}^2 \notag\\
&\leq \gamma_n\|v_{n}-\overline{v} + \frac{\tau_n}{\gamma_n}UL(p_n-\overline{x})\|_{(\tau_nU)^{-1}}^2
- \gamma_n\| v_{n}-v_{n+1} + \frac{\tau_n}{\gamma_n}UL(p_n-\overline{x})\|_{(\tau_nU)^{-1}}^{2}.
\end{alignat}
We have 
\begin{alignat}{2}
\gamma_n \|v_{n}-&\overline{v} + \frac{\tau_n}{\gamma_n}UL(p_n-\overline{x})\|_{(\tau_nU)^{-1}}^2\notag\\ 
&=\gamma_n\|v_{n}-\overline{v} \|_{(\tau_nU)^{-1}}^2
+2\scal{L^*(v_{n}-\overline{v}) }{p_n-\overline{x}}+\frac{1}{\gamma_n} \|L(p_n-\overline{x})\|_{(\tau_nU)}^{2}
\end{alignat}
and 
\begin{alignat}{2}
\gamma_n\| v_{n}-&v_{n+1} + \frac{\tau_n}{\gamma_n} UL(p_n-\overline{x})\|_{(\tau_nU)^{-1}}^{2}\notag\\
&= \gamma_n\| v_{n+1}-v_n\|_{(\tau_nU)^{-1}}^2 + 2\scal{L^{*}(v_{n}-v_{n+1})}{p_n-\overline{x}} + \frac{1}{\gamma_n}\|L(p_n-\overline{x}) \|_{(\tau_nU)}^2.
\end{alignat}
By inserting the last two equalities into \eqref{est:1}, we obtain
 \begin{alignat}{2}
\label{e:esta1}
\gamma_n\| v_{n+1}-\overline{v}\|_{(\tau_nU)^{-1}}^2 \leq \gamma_n\|v_{n}-\overline{v} \|_{(\tau_nU)^{-1}}^2- \gamma_n\| v_{n+1}-v_n\|_{(\tau_nU)^{-1}}^2
+2\scal{L^*(v_{n+1}- \overline{v})}{p_n-\overline{x}}.
\end{alignat}
Let us now estimate the last term in \eqref{e:esta1}. Since $P_V$ is self-adjoint, $P_VP_V =P_V$, and $x_{n+1}-\overline{x}\in V$,
we have
\begin{alignat}{2}
\label{e:esta1a}
\scal{L^*(v_{n+1}- \overline{v})}{p_n-\overline{x}} 
&= \scal{L^*(v_{n+1}- \overline{v})}{x_{n+1}-\overline{x}} + \scal{L^*(v_{n+1}- \overline{v})}{p_n-x_{n+1}}\notag\\
&= \scal{L^*(v_{n+1}- \overline{v})}{P_V(x_{n+1}-\overline{x})} +\gamma_n \scal{L^*(v_{n+1}- \overline{v})}{P_VL^*(v_{n+1}- v_{n})}\notag\\
&= \scal{P_VL^*(v_{n+1}- \overline{v})}{x_{n+1}-\overline{x}} +\gamma_n \scal{P_VL^*(v_{n+1}- \overline{v})}{P_VL^*(v_{n+1}- v_{n})}.
\end{alignat}
Upon setting $M= LP_VL^*$, we have
\begin{alignat}{2}
\label{e:esta1b}
2\gamma_n \scal{(v_{n+1}- \overline{v})}{M(v_{n+1}- v_{n})} 
&= \gamma_n \|v_{n+1}- \overline{v}\|_M^2 + \gamma_n \| v_{n+1}-v_n \|_M^2-\gamma_n \|v_{n}- \overline{v}\|_M^2
\end{alignat}
Set
\begin{equation} 
(\forall n\in\NN)\;
\begin{cases}
 \chi_n&= 2\scal{P_VL^*(v_{n+1}- \overline{v})}{x_{n+1}-\overline{x}} ,\\
\zeta_n &= 2\scal{r_{n}-Bx_{n}}{x_{n+1}-\overline{x}},
\end{cases}
\end{equation}
and note that, by~\eqref{e:esta1},\eqref{e:esta1a}, and \eqref{e:esta1b}
\begin{equation}
\label{e:vn}
\gamma_n \|v_{n+1}- \overline{v}\|_{(\tau_nU)^{-1}-M}^2 \leq \gamma_n \|v_{n}- \overline{v}\|_{(\tau_nU)^{-1}-M}^2 -\gamma_n \|v_{n+1}-{v_n}\|_{(\tau_nU)^{-1}-M}^2 +\chi_n
\end{equation}
Equation \eqref{e:kt} yields 
\begin{align}
\label{e:addv}
\nonumber P_VL^*v_{n+1}&=\frac{1}{\gamma_n}\Big(P_Vx_n-x_{n+1}\Big)-P_Vr_n\\
-P_VL^*\overline{v}&=P_V B\overline{x}.
\end{align}
Using the monotonicity and the cocoercivity of $B$, and noting that  $x_{n+1}-\overline{x}\in V$, \eqref{e:addv} yields
\begin{alignat}{2}
\label{e:esta1c}
\chi_n+\zeta_n&= \frac{2}{\gamma_n}\scal{P_V x_{n}-x_{n+1}}{x_{n+1}-\overline{x}}-2\scal{P_V r_{n}-r_{n}}{x_{n+1}-\overline{x}}
-2\scal{Bx_{n}-P_VB\overline{x}}{x_{n+1}-\overline{x}}
\notag\\
&=\frac{2}{\gamma_n}\scal{x_{n}-x_{n+1}}{x_{n+1}-\overline{x}}
-2\scal{Bx_{n}- B\overline{x} }{x_{n}-\overline{x}}-2\scal{Bx_{n}- B\overline{x} }{x_{n+1}-{x_n}} \notag\\
&\leq \frac{2}{\gamma_n}\scal{x_{n}-x_{n+1}}{x_{n+1}-\overline{x}}
+2\scal{Bx_{n}- B\overline{x} }{x_{n}-x_{n+1}} - 2\beta \| Bx_{n}-B\overline{x}\|^2\notag\\
&\leq \frac{2}{\gamma_n}\scal{x_{n}-x_{n+1}}{x_{n+1}-\overline{x}} - 2\beta \| Bx_{n}-B\overline{x}\|^2
+\beta \| Bx_{n}-B\overline{x}\|^2  + \beta^{-1} \| x_{n}-x_{n+1}\|^2\notag\\
&\leq \frac{1}{\gamma_n}\big( \| x_{n}-\overline{x}\|^2 -\|x_{n+1}-\overline{x} \|^2  \big) 
- \Big(\frac{1}{\gamma_n}-\frac{1}{\beta}\Big) \| x_{n}-x_{n+1}\|^2- \beta \| Bx_{n}-B\overline{x}\|^2.
\end{alignat}
Set $R_n = \gamma_n^2\left((\tau_nU)^{-1}-M\right)$ and define 
\begin{equation}
\label{e:esta1d}
|||\cdot |||_n^{2}\colon \HH\times\GG \ni (x,v) \mapsto  \|x\|^2 + \|v\|^{2}_{R_n}
\quad \text{and}\quad \overline{\xx} = (\overline{x},\overline{v}), 
(\forall n\in\NN)\quad \xx_n = (x_n,v_n).
\end{equation}
We derive from \eqref{e:vn}, \eqref{e:esta1c}, and \eqref{e:esta1d} that 
\begin{alignat}{2}
\label{e:fejer1}
|||\xx_{n+1}-\overline{\xx} |||_n^{2} &\leq |||\xx_{n}-\overline{\xx} |||_n^{2}-\Big(1-\frac{\gamma_n}{\beta}\Big) \| x_{n+1}-x_{n}\|^2
- \| v_{n+1}-v_{n}\|_{R_n}^2 -\gamma_n\zeta_n- \gamma_n\beta \| Bx_{n}-B\overline{x}\|^2.
\end{alignat}
Let us set 
\begin{equation}
\label{e:exact}
\begin{cases}
\overline{p}_{n} =  P_V(x_{n} - \gamma_n(L^* {v}_{n} + Bx_{n}))\\
\overline{v}_{n+1} = J_{(\tau_n/\gamma_n)UA^{-1}}({v}_{n}+\frac{\tau_n}{\gamma_n} U L\overline{p}_{n})\\
\overline{x} _{n+1} =P_V( x_{n} - \gamma_n(L^* \overline{v}_{n+1} + Bx_{n}) ).\\
\end{cases}
\end{equation}
It follows from the nonexpansiveness of $J_{(\tau_n/\gamma_n)UA^{-1}}$ with respect to $\|\cdot\|_{(\gamma_n/\tau_n)U^{-1}}$ and the 
nonexpansiveness of $P_V$ that
\begin{alignat}{2}
\label{e:asin}
\|x_{n+1}-\overline{x}_{n+1}\|&\leq \gamma_n\|L^*(v_{n+1}-\overline{v}_{n+1}) +r_n-Bx_n\|\notag\\
&\leq \gamma_n \Big(\|L\|\|\overline{v}_{n+1}-v_{n+1}\|+\|r_n-Bx_n\| \Big)\notag\\
&\leq \gamma_n \Big(({\tau_n}/{\gamma_n})^{1/2}\|L\|^2 \|U\|^{1/2}\|\overline{p}_n-p_n\|+\|r_n-Bx_n\| \Big)\notag\\
&\leq  \gamma_n \Big((\tau_n\gamma_n)^{1/2} \|U\|^{1/2}\|L\|^2\|r_n-Bx_n\|+\|r_n-Bx_n\| \Big).\notag\\
&\leq  \gamma_n ((\tau_n\gamma_n)^{1/2}\|U\|^{1/2}\|L\|^2+1)\|r_n-Bx_n\|.
\end{alignat}
Hence, upon setting $\sigma_n = \gamma_n ((\tau_n\gamma_n)^{1/2}\|U\|^{1/2}\|L\|^2+1)$,
\begin{alignat}{2}
\label{e:asin1}
(\forall n\in\NN)\quad -\zeta_n 
&= -2\scal{r_{n} - Bx_{n}}{\overline{x}_{n+1}-\overline{x}}- 2\scal{r_{n} - Bx_{n}}{x_{n+1}-\overline{x}_{n+1}}\notag\\
&\leq -2\scal{r_{n} - Bx_{n}}{\overline{x}_{n+1}-\overline{x}} + 2\|r_{n} - Bx_{n} \| \| x_{n+1}-\overline{x}_{n+1}\|\notag\\
&\leq -2\scal{r_{n} - Bx_{n}}{\overline{x}_{n+1}-\overline{x}}+ \sigma_n\|r_{n} - Bx_{n} \|^2,
\end{alignat}
Since $(\gamma_n)_{n\in\NN}$ is decreasing and $(\tau_n)_{n\in\NN}$ is increasing, we derive from inequality \eqref{e:fejer1} that
\begin{alignat}{2}
\label{e:ee1s}
 |||\xx_{n+1}-\overline{\xx} |||_{n+1}^{2} 
&\leq |||\xx_{n}-\overline{\xx} |||_n^{2}-\Big(1-\frac{\gamma_n}{\beta}\Big) \| x_{n+1}-x_{n}\|^2
- \| v_{n+1}-v_{n}\|_{R_n}^2 - \gamma_n\beta \| Bx_{n}-B\overline{x}\|^2\notag\\
&\quad-2\gamma_n\scal{r_{n} - Bx_{n}}{\overline{x}_{n+1}-\overline{x}} + \gamma_n\sigma_n\|r_{n} - Bx_{n} \|^2 .
\end{alignat}

Since $L$, $B$, $J_{(\tau_n/\gamma_n)UA^{-1}}$, and $P_V$ are continuous, and  $x_n$ is $\FF_n$-measurable, $\overline{x}_{n+1}-\overline{x}$ is $\FF_n$-measurable, and by  \ref{con:iii}, we obtain 
\begin{alignat}{2}
(\forall n\in\NN)\quad \E[\scal{r_{n} - Bx_{n}}{\overline{x}_{n+1}-\overline{x}}|\FF_n]=\scal{ \E[r_{n} - Bx_{n}|\FF_n]}{\overline{x}_{n+1}-\overline{x}}=0.
\end{alignat}
Therefore, by taking conditional expectation with respect to $\FF_n$ of both sides of \eqref{e:ee1s}, we obtain
\begin{alignat}{2}\label{e:ppsa}
(\forall n\in\NN)\quad 
\E[|||\xx_{n+1}-\overline{\xx} |||_{n+1}^{2}  |\FF_n]  
&\leq |||\xx_{n}-\overline{\xx} |||_n^{2} - 
\E\bigg[ \Big(1-\frac{\gamma_n}{\beta}\Big) \| x_{n+1}-x_{n}\|^2
+\| v_{n+1}-v_{n}\|_{R_n}^2  \Big|\FF_n\bigg]\notag\\
&\quad\gamma_n\sigma_n \E[\|r_{n} - Bx_{n} \|^2 |\FF_n]-\gamma_n\beta \| Bx_{n}-B\overline{x}\|^2.
\end{alignat}
Since the sequence $(\gamma_n\sigma_n\E[\|r_{n} - Bx_{n} \|^2 |\FF_n])_{n\in\NN}$ 
is summable a.s by assumptions \ref{con:t:1ii},\ref{con:t:1iii}  and \ref{con:t:1v}, in view of  \eqref{eq:rob}, \eqref{e:ppsa} shows that 
$(\xx_n)_{n\in\NN}$  is a stochastic quasi-Fej\'{e}r monotone sequence with respect to
the target set $\boldsymbol{\mathcal{S}}$. Therefore, it follows from Lemma \ref{p:fejer} and condition~\ref{con:t:1iii} that 
\begin{equation}
\label{e:bas0}
\begin{cases}
(\xx_n)_{n\in\NN}\; \text{is bounded a.s.}\\ 
\sum_{n\in\NN} \E\big[\big(1-\frac{\gamma_n}{\beta}\big) \| x_{n+1}-x_{n}\|^2
+\| v_{n+1}-v_{n}\|_{R_n}^2  \big|\FF_n\big] < +\infty\; \text{ a.s.}\\
\sum_{n\in\NN} \| Bx_{n}-B\overline{x}\|^2 < +\infty \; \text{ a.s.}
\end{cases}
\end{equation} 
Since $L, B, P_V$ and $J_{(\tau_n/\gamma_n)UA^{-1}}$ are continuous, for every $n\in\NN$,  $\overline{x}_{n+1}$, 
$\overline{v}_{n+1}$ and $ \overline{p} _{n}$ are $\FF_n$-measurable. 
Therefore, for every $n\in\NN$,
\begin{alignat}{2}
\label{e:bas}
\|\overline{x}_{n+1}-x_{n}\|^2 + \|\overline{v}_{n+1}-v_{n}\|_{U^{-1}}^2  
&= \E[ \|\overline{x}_{n+1}-x_{n}\|^2 |\FF_n] + \E[ \| \overline{v}_{n+1}-v_{n}\|_{U^{-1}}^2 |\FF_n] \notag\\
&\leq 2\bigg(\E[ \|\overline{x}_{n+1}-x_{n+1}\|^2 |\FF_n] +  \E[ \|x_{n+1}-x_{n}\|^2 |\FF_n] \notag\\
&\quad+ \E[ \|\overline{v}_{n+1}-v_{n+1}\|_{U^{-1}}^2 |\FF_n] +  \E[ \|v_{n+1}-v_{n}\|_{U^{-1}}^2 |\FF_n] 
\bigg).
\end{alignat}
Note that, since $(\tau_n/\gamma_n)A^{-1}$ is maximally monotone,  $J_{U(\tau_n/\gamma_n)A^{-1}}$ is  
firmly nonexpansive with respect to $\scal{\cdot}{\cdot}_{U^{-1}}$ by Lemma~\ref{l:maxmon45}. Thus,
\begin{alignat}{2}
\label{e:bas1}
\E[\|\overline{v}_{n+1}-v_{n+1}\|_{U^{-1}}^2 |\FF_n] 
& \leq (\tau_n/\gamma_n) \E[ \| U L(\overline{p}_{n}-p_{n})\|_{U^{-1}}^2|\FF_n] \notag\\
&\leq \tau\gamma_0 \E[ \| U LP_V(r_n-Bx_{n})\|_{U^{-1}}^2|\FF_n] \to 0,
\end{alignat}
and hence
\begin{alignat}{2}
\label{e:bas2}
\E[ \|\overline{x}_{n+1}-x_{n+1}\|^2 |\FF_n] 
& \leq2 \gamma_0^2 \E[\| P_V L^*(\overline{v}_{n+1}-v_{n+1})\|^2|\FF_n] + 
2\gamma_0^2 \E[\| P_V(r_n- Bx_{n})\|^2|\FF_n]  \to 0.
\end{alignat}
Now using  \eqref{e:bas0}, \eqref{e:bas1}, and \eqref{e:bas2},  we derive from 
\eqref{e:bas} that 
\begin{equation}
\label{e:baba1}
\overline{x}_{n+1}-x_{n} \to 0
\quad \text{and}\quad 
\overline{v}_{n+1}-v_{n} \to 0,
\quad \text{and}\quad 
\overline{x}_{n+1}-\overline{p}_{n} \to 0
\quad \quad \text{a.s.}
\end{equation}
Moreover, it follows from the third line of \eqref{e:bas0} that 
\begin{equation}
\label{e:baba2}
Bx_n \to B\overline{x} \quad\text{a.s.},
\end{equation}
and from \eqref{e:exact} that 
\begin{equation}
\label{e:baba3}
U^{-1}(v_n-\overline{v}_{n+1}) \in \frac{\tau_n}{\gamma_n}\big(A^{-1}\overline{v}_{n+1} - L\overline{p}_n\big) 
\quad \text{and} \quad  \gamma_n^{-1}(x_n-\overline{x}_{n+1}) - Bx_n \in L^*\overline{v}_{n+1}+N_V\overline{x}_{n+1},
\end{equation}
almost surely.
Next, let us prove that every weak cluster point of 
$(\xx_n)_{n\in\NN}$ is in $\boldsymbol{\mathcal{S}}$ a.s. 
Let ${\Omega}_0$ be the set of all $\omega \in {\Omega}$ 
such that $(\xx_n(\omega))_{n\in\NN}$ is bounded 
and \eqref{e:baba1},\eqref{e:baba2} and \eqref{e:baba3} are satisfied. We have 
${\mathsf{P}}(\Omega_0) =1$. Fix $\omega \in \Omega_0$.
Let $\xx(\omega) = (x(\omega), v(\omega))$ be a weak cluster point of $(\xx_n(\omega))_{n\in\NN}$. 
Then there exists a subsequence $(\xx_{k_n}(\omega))_{n\in\NN}$ that converges weakly to
$\xx(\omega)$. 
By  \eqref{e:baba1}
\begin{equation}
\label{e:baba4}
\overline{x}_{k_n+1}(\omega)-x_{k_n}(\omega) \to 0
\quad \text{and}\quad 
\overline{v}_{k_n+1}(\omega)-v_{k_n}(\omega) \to 0,
\quad \text{and}\quad 
\overline{x}_{k_n+1}(\omega)-\overline{p}_{k_n}(\omega) \to 0
\end{equation}
and hence by \eqref{e:baba2}, and \cite[Proposition 20.33(ii)]{livre1}
\begin{equation} 
(\overline{x}_{k_n+1}(\omega), \overline{v}_{k_n+1}(\omega),\overline{p}_{k_n}(\omega))
\weakly  (x(\omega), v(\omega),x(\omega))
\quad 
\text{and}\quad 
Bx_{k_n}(\omega) \to B\overline{x} = Bx(\omega).
\end{equation} 
The operator $(x,p,v)\in\HH\times\HH\times\GG \mapsto (N_V(x),0,A^{-1}v) + (L^*v,0,-Lx)$ is maximally monotone 
by \cite[Corollary 24.4(i)]{livre1}, since it is the sum of two maximally monotone operators \cite[Proposition 20.23 and Example 20.30]{livre1}.
Hence, its graph is sequentially closed in $\HH^{\mathrm{weak}}\times\HH^{\mathrm{strong}}$ by \cite[Proposition 20.33(ii)]{livre1},
and we derive from \eqref{e:baba3} and \eqref{e:baba4} that 
\begin{equation}
-Bx(\omega) = L^*v(\omega) + N_Vx(\omega)
\quad \text{and}\quad 
Lx(\omega) \in A^{-1} v(\omega),
\end{equation}
which shows that $(x(\omega),v(\omega))\in \boldsymbol{\mathcal{S}}$ by \eqref{e:kt}. 
Altogether, it follows Lemma \ref{p:fejer}\ref{p:fejeriv}
that $\xx_n\weakly \xx$, with $\xx$ which is $\mathcal{P}\times\mathcal{D}$-valued a.s.
\end{proof}

A direct corollary of  the above theorem is the exact case,
where no stochastic errors occur.
\begin{corollary}
Suppose that $\mathcal{P}$ is non empty.
Let $(\gamma_n)_{n\in\NN}$ be a decreasing sequence  of strictly positive real numbers such that $\gamma_0 < \beta$, 
let $(\tau_n)_{n\in\NN}$ be  an increasing sequence  of strictly positive real numbers such that $\tau=\sup_{n\in\NN}\tau_n<+\infty$, 
let $U$ be a self adjoint positive definite on $\GG$ such that $(\tau U)^{-1}- LP_VL^*$ is positive definite,
let $x_{0}\in \HH$ and $v_{0}\in \GG$.
Iterate
\begin{equation}
\label{e:Tsengaadet}
(\forall n\in\NN)\quad
\begin{array}{l}
\left\lfloor
\begin{array}{l}
p_{n} =  P_V( x_{n} - \gamma_n(L^* v_{n} + Bx_n))\\
v_{n+1} = J_{(\tau_n/\gamma_n)UA^{-1}}(v_{n}+ \frac{\tau_n}{\gamma_n}U Lp_{n})\\
x_{n+1} = P_V( x_{n} - \gamma_n(L^*v_{n+1} + Bx_n)).\\
\end{array}
\right.\\[2mm]
\end{array}
\end{equation}
Then the following hold for some  $(x,v)\in \mathcal{P}\times\mathcal{D}$-valued.
\begin{enumerate}
\item $(x_n)_{n\in\NN}$ converges weakly $x$ and $ (v_n)_{n\in\NN}$ converges weakly to $v$.
\item $\sum_{n\in\NN}\|Bx_n-Bx\|^2 < +\infty$.
\end{enumerate}
\end{corollary}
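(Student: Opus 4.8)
The plan is to obtain the corollary as the deterministic specialization of Theorem~\ref{t:1}. First I would take the underlying probability space to be trivial (equivalently, regard each iterate as a constant random variable) and set $r_n = Bx_n$ for every $n\in\NN$. With this choice the recursion \eqref{e:Tsengaadet} is literally Algorithm~\ref{algo:1}, and the square-integrability requirements on $x_0$, $v_0$, and $(r_n)_{n\in\NN}$ hold automatically, since all three are deterministic.

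Next I would verify the five hypotheses of Theorem~\ref{t:1} with $\FF_n = \sigma((x_k,v_k)_{0\le k\le n})$. Conditions \ref{con:t:1i}, \ref{con:t:1ii} and \ref{con:t:1iii} are precisely the assumptions made in the corollary. Condition \ref{con:t:1iv} becomes $\E[r_n|\FF_n] = Bx_n$, which is immediate because $r_n = Bx_n$ is $\FF_n$-measurable. Condition \ref{con:t:1v} becomes $\sum_{n\in\NN}\E[\|r_n - Bx_n\|^2|\FF_n] = 0 < +\infty$. Theorem~\ref{t:1} then produces a random vector $(x,v)$, a.s.\ valued in $\mathcal{P}\times\mathcal{D}$, such that $x_n\weakly x$, $v_n\weakly v$, and $\sum_{n\in\NN}\|Bx_n - Bx\|^2 < +\infty$, all almost surely; since every quantity in play is deterministic, these almost sure statements reduce to the plain assertions (i) and (ii), and $(x,v)$ is a deterministic element of $\mathcal{P}\times\mathcal{D}$.

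I do not expect any genuine obstacle here; the only points requiring (trivial) care are checking that the correction step and the projection in \eqref{e:Tsengaadet} coincide with those of Algorithm~\ref{algo:1} after the substitution $r_n = Bx_n$, and noting that the a priori random limit point collapses to a deterministic one. Should a self-contained argument be preferred, an alternative is to rerun the estimates \eqref{e:vn}--\eqref{e:fejer1} with $r_n = Bx_n$: then $\zeta_n\equiv 0$, the factor $\sigma_n$ plays no role, inequality \eqref{e:fejer1} is a genuine (non-stochastic) quasi-Fej\'er inequality with respect to the metric $|||\cdot|||_n$, and the conclusions follow from the deterministic counterpart of Lemma~\ref{p:fejer} together with the demiclosedness argument already carried out in the proof of Theorem~\ref{t:1}.
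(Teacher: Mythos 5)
Your proposal is correct and matches the paper's intent exactly: the paper states this result as ``a direct corollary of the above theorem'' (the exact case with no stochastic errors), which is precisely your specialization $r_n = Bx_n$ on a trivial probability space, under which conditions \ref{con:t:1iv} and \ref{con:t:1v} of Theorem~\ref{t:1} hold trivially. No further comment is needed.
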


In the following remark we comment on the features of the proposed algorithm and 
we discuss relations with existing work. 
\begin{remark}\ 
\begin{itemize} 
\item Specific instances of the algorithm considered here has been studied in the deterministic case 
in two papers independently: in \cite{DroSabTeb15} to solve a saddle point problem, and in 
\cite{LorVer11} to minimize a regularized
least squares problem. 
\item The proposed algorithm extends the algorithm in \cite{DroSabTeb15,LorVer11}
in several directions. First, we consider monotone inclusions instead of
saddle point problems, and, second, we allow for stochastic evaluations
of the single valued cocoercive operator. Moreover, the analysis encompasses a
 variable step-size, in contrast to the fixed one considered in \cite{DroSabTeb15,LorVer11}.
Note that our algorithm gives the possibility to treat linear constraints differently from
what has been proposed in \cite[Section 4.2]{DroSabTeb15}, thanks to the presence of the 
projection step. Concerning the results, it is worth noting
that the proof of weak convergence relies on different tools than the ones in \cite{DroSabTeb15,LorVer11},
which are specialized to the variational case.
\item As in the deterministic setting, each iteration of the algorithm consists of three steps. Note that, differently
from the extra-gradient methods \cite{Kor76,Nem04}, the third step
does not require an additional evaluation of the operator $B$. When $V=\HH$, and the step-sizes are
constant, the algorithm is a specific instance of the one considered in \cite{RosVilVu15}. Due to the
variable step-size, we need to use the concept of variable metric stochastic Fej\'er monotonicity \cite{optim2,Vu15}. 
\end{itemize}
\end{remark}

\section{Saddle point problems}
\label{sec:erg}
We next prove some results on ergodic convergence of the duality gap for the case of 
minimization or saddle point problems.

\begin{problem}
\label{pro:pd}
Let $h\colon \HH \to \RR$ be a convex differentiable function with a $\beta^{-1}$-Lipschitz continuous gradient,
for some $\beta \in \left] 0,+\infty\right[$, let $g\in\Gamma_0(\GG)$ and $L\in\mathcal{B}(\HH,\GG)$, 
let $V$ be a closed vector subspace of $\HH$.
The primal problem 
is to 
\begin{equation}
\label{primal1}
\underset{x\in V}{\text{minimize}}\; h(x) +g(Lx),
\end{equation}
and the dual problem is to 
\begin{equation}
\label{dual1}
\underset{v\in\GG}{\text{minimize}}\; (h+\iota_V)^*(-L^*v) +g^*(v),
\end{equation}
Denote by $\mathcal{P}_V$ and $\mathcal{D}_V$ the set of solutions to \eqref{primal1} and \eqref{dual1}, respectively.
\end{problem}
We consider the convex-concave saddle point formulation of the above problem by setting
\begin{align}
\label{e:saddle}
K\colon \HH\times\GG&\to \mathbb{R}\cup\{-\infty,+\infty\}\notag\\
(x,v) &\mapsto h(x)+\iota_V(x)+ \scal{Lx}{v} -g^*(v).
\end{align}
We are interested in finding $(\overline{x},\overline{v})\in \mathcal{P}_V\times \mathcal{D}_V$, 
or equivalently (under suitable qualification conditions, see \cite[Theorem 15.23 and Proposition 19.18]{livre1}), a saddle point of \eqref{e:saddle}. 
We will consider the following notion of approximated saddle points, extending to the
stochastic case the one given in
\cite{NemYud83}. Let $\varepsilon>0$. A $\HH\times\GG$ valued random variable is an $\varepsilon$-
saddle point of $K$ in expectation if
\begin{equation}
\label{e:esaddle}
\sup_{(\overline{x},\overline{v})\in\mathcal{P}_V\times\mathcal{D}_V}\E\left[ K(z,\overline{v})-K(\overline{x},u)\right]\leq \epsilon
\end{equation}
\begin{algorithm}
\label{a:apd}
Let $(\gamma_n)_{n\in\NN}$ be a decreasing sequence of strictly positive real numbers, 
let $(\tau_n)_{n\in\NN}$ be an increasing sequence of strictly positive real numbers, 
let $U$ be a self adjoint positive definite linear operator on $\GG$,
let $(r_{n})_{n\in\NN}$  be  a $\HH$-valued, square integrable random process,
let $x_{0}$ be  a $V$-valued, squared integrable random variable and 
let $v_{0}$ be  a $\GG$-valued, squared integrable random variable.
Iterate
\begin{equation}
\label{e:Tsengaaa}
(\forall N\in\NN)\quad
\begin{array}{l}
\left\lfloor
\begin{array}{l}
\operatorname{For}\;n=0,\ldots,N\\
\left\lfloor
\begin{array}{l}
p_{n} = P_V(x_{n} - \gamma_n(L^* v_{n} + r_{n}))\\
v_{n+1} = \prox^{U^{-1}}_{(\!\tau_n/\gamma_n\!) g^{*}}(v_{n}+ \frac{\tau_n}{\gamma_n}U Lp_{n})\\
x_{n+1} =P_V( x_{n} - \gamma_n(L^*v_{n+1} + r_{n}))\\
\end{array}
\right.\\[2mm]
\tilde{x}_{N} =  \Big(\sum_{n=0}^N\gamma_n\Big)^{-1}\sum_{n=0}^N \gamma_n x_{n+1}\\
\tilde{v}_{N} =  \Big(\sum_{n=0}^N\gamma_n\Big)^{-1}\sum_{n=0}^N \gamma_n v_{n+1}.\\
\end{array}
\right.\\[2mm]
\end{array}
\end{equation}
\end{algorithm}
\begin{theorem}
\label{thm:erg}
In the setting of Problem \ref{pro:pd}, suppose that $\mathcal{P}_V$ is non empty. 
Moreover, assume that the following conditions are satisfied for 
Algorithm~\ref{a:apd}, with $\FF_n = \sigma((x_k,v_k)_{0\leq k\leq n})$
\begin{enumerate}
\item 
\label{con:ii} $\gamma_{0}\in\left]0,\beta\right[$. 
\item
\label{con:sigma} there exists $\tau\in\left]0,+\infty\right[$ such that  $\sup_{n\in\NN}\tau_n\leq \tau$ and  $(\tau U)^{-1}-LP_VL^*$ is positive semidefinite.
\item  $(\forall n\in\NN)\; \E[r_n |\FF_n] = \nabla h(x_n)$.
\item $c_0=\sum_{n\in\NN}\gamma_{n}^2 \E[\|r_{n}-\nabla h(x_{n})\|^2] < \infty$.
\end{enumerate}
Let $x\in V$ and let $v\in\dom g^*$.
Set 
\begin{equation*}
c(x,v)= \E[\| x_0-x\|^2 +\gamma_0^2\|v_0-v \|_{(\tau_0U)^{-1}-LP_VL^*}^2]
+ 2((\tau\gamma_0\|U\|)^{1/2} \|L\|^2+1)c_0.
\end{equation*}
Then 
\begin{equation}
\label{e:res1}
\E[K(\tilde{x}_N,v) - K(x,\tilde{v}_N) ]\leq \frac{c(x,v)}{2}\Big(\sum_{n=0}^N\gamma_n\Big)^{-1}.
\end{equation}
\end{theorem}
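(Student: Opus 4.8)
The plan is to derive, for every $n$, a one--step estimate of the form
\[
\gamma_n\big(K(x_{n+1},v)-K(x,v_{n+1})\big)\;\le\; D_n-D_{n+1}+\gamma_n\,\scal{\nabla h(x_n)-r_n}{x_{n+1}-x},
\]
where $D_n=\tfrac12\|x-x_n\|^2+\tfrac12\|v-v_n\|_{R_n}^2$ and $R_n=\gamma_n^2\big((\tau_nU)^{-1}-LP_VL^*\big)$, then to sum it over $n=0,\dots,N$, take expectations and divide by $\sum_{n=0}^N\gamma_n$. The passage to the ergodic iterates is then immediate: $K(\cdot,v)$ is convex, $K(x,\cdot)$ is concave, and the weights $\gamma_n\big(\sum_{k=0}^N\gamma_k\big)^{-1}$ sum to $1$, so Jensen's inequality gives $K(\tilde x_N,v)-K(x,\tilde v_N)\le\big(\sum_{k=0}^N\gamma_k\big)^{-1}\sum_{n=0}^N\gamma_n\big(K(x_{n+1},v)-K(x,v_{n+1})\big)$; here I use that $x,x_{n+1},\tilde x_N\in V$ and $v,v_{n+1},\tilde v_N\in\dom g^*$, so that every value of $K$ occurring is finite.

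To obtain the one--step estimate I would apply the characterization \eqref{e:up} twice. Since $x_{n+1}=\prox_{\iota_V}\big(x_n-\gamma_n(L^*v_{n+1}+r_n)\big)$ and $x\in V$, \eqref{e:up} with $U=\Id$ gives $\gamma_n\scal{x_{n+1}-x}{L^*v_{n+1}+r_n}\le\tfrac12\|x-x_n\|^2-\tfrac12\|x-x_{n+1}\|^2-\tfrac12\|x_n-x_{n+1}\|^2$; since $v_{n+1}=\prox^{U^{-1}}_{(\tau_n/\gamma_n)g^*}\big(v_n+(\tau_n/\gamma_n)ULp_n\big)$, \eqref{e:up} with $U$ replaced by $U^{-1}$, multiplied by $\gamma_n^2/\tau_n$, gives $\gamma_n\big(g^*(v_{n+1})-g^*(v)\big)+\gamma_n\scal{v-v_{n+1}}{Lp_n}\le\tfrac{\gamma_n^2}{2}\big(\|v-v_n\|_{(\tau_nU)^{-1}}^2-\|v-v_{n+1}\|_{(\tau_nU)^{-1}}^2-\|v_{n+1}-v_n\|_{(\tau_nU)^{-1}}^2\big)$. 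Adding these two inequalities together with the convexity bound $h(x_n)-h(x)\le\scal{\nabla h(x_n)}{x_n-x}$ and the descent lemma $h(x_{n+1})-h(x_n)\le\scal{\nabla h(x_n)}{x_{n+1}-x_n}+\tfrac1{2\beta}\|x_{n+1}-x_n\|^2$ (valid because $\nabla h$ is $\beta^{-1}$-Lipschitz), and regrouping according to the definition \eqref{e:saddle} of $K$, the $r_n$--terms assemble into $\gamma_n\scal{\nabla h(x_n)-r_n}{x_{n+1}-x}$, the coefficient of $\|x_n-x_{n+1}\|^2$ becomes $-\tfrac12(1-\gamma_n/\beta)\le0$ by Condition~\ref{con:ii}, and the bilinear terms collapse to a multiple of $\scal{v_{n+1}-v}{L(x_{n+1}-p_n)}$. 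Because $P_V$ is linear and $P_VP_V=P_V$, one has $x_{n+1}-p_n=-\gamma_nP_VL^*(v_{n+1}-v_n)$, so this term equals $-\gamma_n^2\scal{v_{n+1}-v}{M(v_{n+1}-v_n)}$ with $M=LP_VL^*$, which expands into a telescoping $M$--norm contribution plus a nonpositive remainder $-\tfrac12\|v_{n+1}-v_n\|_{R_n}^2$ (nonnegativity of $(\tau_nU)^{-1}-M$, hence of $R_n$, following from Condition~\ref{con:sigma} and $\tau_n\le\tau$). Collecting the $v$--terms yields $\tfrac12\|v-v_n\|_{R_n}^2-\tfrac12\|v-v_{n+1}\|_{R_n}^2$, and since $(\gamma_n)$ decreases and $(\tau_n)$ increases we have $R_{n+1}\preceq R_n$, so $-\tfrac12\|v-v_{n+1}\|_{R_n}^2\le-\tfrac12\|v-v_{n+1}\|_{R_{n+1}}^2$, which gives the displayed one--step estimate.

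For the stochastic inner product I would reuse the ``exact'' auxiliary iteration of \eqref{e:exact}, namely $\overline p_n,\overline v_{n+1},\overline x_{n+1}$ produced by Algorithm~\ref{a:apd} with $r_n$ replaced by $\nabla h(x_n)$ (the dual step read as $\prox^{U^{-1}}_{(\tau_n/\gamma_n)g^*}$). Then $\overline x_{n+1}$ is $\FF_n$--measurable, and the estimate \eqref{e:asin} applies verbatim --- using Lemma~\ref{l:maxmon45} for the firm nonexpansiveness of $\prox^{U^{-1}}_{(\tau_n/\gamma_n)g^*}$ in the metric $\|\cdot\|_{(\gamma_n/\tau_n)U^{-1}}$ --- so that $\|x_{n+1}-\overline x_{n+1}\|\le\sigma_n\|r_n-\nabla h(x_n)\|$ with $\sigma_n=\gamma_n\big((\tau_n\gamma_n)^{1/2}\|U\|^{1/2}\|L\|^2+1\big)$. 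Writing $\scal{\nabla h(x_n)-r_n}{x_{n+1}-x}=\scal{\nabla h(x_n)-r_n}{\overline x_{n+1}-x}+\scal{\nabla h(x_n)-r_n}{x_{n+1}-\overline x_{n+1}}$, Condition~(iii) makes the conditional expectation of the first summand vanish, while the second is $\le\sigma_n\|r_n-\nabla h(x_n)\|^2$; since $\gamma_n\sigma_n\le\gamma_n^2\big((\tau\gamma_0\|U\|)^{1/2}\|L\|^2+1\big)$, summing over $n=0,\dots,N$, taking expectations and invoking Condition~(iv) gives $\sum_{n=0}^N\gamma_n\,\E\big[\scal{\nabla h(x_n)-r_n}{x_{n+1}-x}\big]\le\big((\tau\gamma_0\|U\|)^{1/2}\|L\|^2+1\big)c_0$.

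Putting it together: summing the one--step estimate over $n=0,\dots,N$, discarding the nonpositive $-D_{N+1}$, taking expectations and adding the bound of the previous paragraph, the right--hand side becomes $\E[D_0]+\big((\tau\gamma_0\|U\|)^{1/2}\|L\|^2+1\big)c_0=\tfrac12c(x,v)$; dividing by $\sum_{n=0}^N\gamma_n$ and combining with the Jensen step of the first paragraph yields \eqref{e:res1}. I expect the main obstacle to be the bookkeeping of the second paragraph: arranging the two applications of \eqref{e:up} and the $h$--estimates so that the bilinear terms collapse exactly to $\scal{v_{n+1}-v}{L(x_{n+1}-p_n)}$, and then exploiting $x_{n+1}-p_n=-\gamma_nP_VL^*(v_{n+1}-v_n)$ to turn it into a telescoping variable--metric quantity with the correct sign --- this cancellation, which is precisely where Condition~\ref{con:sigma} and the monotonicity of the step sizes enter, is the core of the predictor--corrector mechanism; the stochastic part then reduces, via the auxiliary exact step, to the same estimate \eqref{e:asin} already used in the proof of Theorem~\ref{t:1}.
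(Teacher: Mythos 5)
Your proposal is correct and follows essentially the same route as the paper's proof: the same one-step estimate obtained from the prox/projection characterizations and the descent lemma, the same use of the identity $x_{n+1}-p_n=-\gamma_nP_VL^*(v_{n+1}-v_n)$ to produce the telescoping $R_n$-norm term, the same monotonicity argument on $(\gamma_n,\tau_n)$, the same auxiliary exact iterates together with the bound \eqref{e:asin} to handle the stochastic inner product, and the same Jensen step for the ergodic averages. The only differences are presentational (you apply \eqref{e:up} to the projection step where the paper uses the explicit update identity, and you keep $h$, $g^*$ and the coupling separate where the paper packages them into $K$ via \eqref{e:3p}), so there is nothing substantive to add.
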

\begin{proof}
The Lipschitz continuity of $\nabla h$ and the convexity of $h$ imply that
\begin{equation}
\label{e:3p}
(\forall u\in\dom g^*) (\forall (t,y,z)\in V^3)\quad K(t,u) \leq K(y,u) + \scal{\nabla_xK(z,u)}{t-y} + \frac{1}{2\beta}\|t-z\|^2.
\end{equation}
Inequality \eqref{e:3p}, with $u= v_{n+1}$, $t= x_{n+1}$,  $y=x$ and $z=x_n$, yields
\begin{alignat}{2}
K(x_{n+1},v_{n+1}) 
&\leq K(x,v_{n+1}) + \scal{\nabla_xK(x_n,v_{n+1})}{x_{n+1}-x} + \frac{1}{2\beta}\|x_{n+1}-x_n\|^2\notag\\
&= K(x,v_{n+1}) + \scal{\nabla h(x_n) + L^*v_{n+1}}{x_{n+1}-x} + \frac{1}{2\beta}\|x_{n+1}-x_n\|^2\notag\\
&= K(x,v_{n+1}) + \scal{P_V( r_n + L^*v_{n+1})}{x_{n+1}-x} + \frac{1}{2\beta}\|x_{n+1}-x_n\|^2 -\zeta_n(x),
\end{alignat} 
where, for every $n\in\NN$, $\zeta_n(x)=\scal{P_V( r_n + L^*v_{n+1})}{x_{n+1}-x} -\scal{\nabla h(x_n) + L^*v_{n+1}}{x_{n+1}-x}$.
Let us set 
\begin{equation}
\label{e:exactm}
\begin{cases}
\overline{p}_{n} =  P_V(x_{n} - \gamma_n(L^* {v}_{n} + \nabla h(x_{n})))\\
\overline{v}_{n+1} =\prox^{U^{-1}}_{(\!\tau_n/\gamma_n\!) g^*}({v}_{n}+ \frac{\tau_n}{\gamma_n}U L\overline{p}_{n})\\
\overline{x} _{n+1} =P_V( x_{n} - \gamma_n(L^* \overline{v}_{n+1} + \nabla h(x_{n})) ).\\
\end{cases}
\end{equation}
Since $x_{n+1}-x \in V$, proceeding as in \eqref{e:asin}-\eqref{e:asin1}, we get
\begin{alignat}{2}
(\forall n\in\NN)\quad  -\zeta_n(x)
&= -\scal{P_V(r_{n} - \nabla h(x_{n}))}{\overline{x}_{n+1}-{x}}
- \scal{P_V( r_{n} - \nabla h(x_{n}))}{ x_{n+1}-\overline{x}_{n+1}}\notag\\
&\leq -\scal{r_{n} - \nabla h(x_{n})}{\overline{x}_{n+1}-{x}} + \sigma_n\|r_{n} - \nabla h(x_{n}) \|^2, 
\end{alignat}
where $\sigma_n = \gamma_n ((\tau_n\gamma_n\|U\|)^{1/2}\|L\|^2+1)$.
Using \eqref{e:Tsengaaa}, we get $P_V( r_n + L^*v_{n+1}) = \gamma^{-1}_n(x_n-x_{n+1})$, and hence 
\begin{alignat}{2}
\label{eq:222}
K(x_{n+1},&v_{n+1}) 
-K(x,v_{n+1}) \leq \gamma^{-1}_n\scal{x_n-x_{n+1} }{x_{n+1}-x} + \frac{1}{2\beta}\|x_{n+1}-x_n\|^2 -\zeta_n(x)\notag\\
&\leq \frac{1}{2\gamma_n}\big( \| x_n-x\|^2 - \|x_{n+1}-x\|^2 \big) 
-  \frac{1}{2}\Big( \frac{1}{\gamma_n}-\frac{1}{\beta}\Big)\|x_{n+1}-x_n\|^2\notag\\
&-\scal{r_{n} - \nabla h(x_{n})}{\overline{x}_{n+1}-{x}} + \sigma_n\|r_{n} - \nabla h(x_{n}) \|^2\notag\\
&\leq  \frac{1}{2\gamma_n} \Big(\| x_n-x\|^2 - \|x_{n+1}-x\|^2 \Big)
-\scal{r_{n} - \nabla h(x_{n})}{\overline{x}_{n+1}-x} + \sigma_n\|r_{n} - \nabla h(x_{n})\|^2.
\end{alignat} 
Therefore,
\begin{multline}
\gamma_n\left(K(x_{n+1},v_{n+1}) 
- K(x,v_{n+1})\right) \leq \frac{1}{2} \left(\| x_n-x\|^2 - \|x_{n+1}-x\|^2 \right)\\
-\gamma_n\scal{r_{n} - \nabla h(x_{n})}{\overline{x}_{n+1}-x} + \sigma_n\gamma_n\|r_{n} - \nabla h(x_{n}) \|^2
\end{multline}
Since
\begin{equation}
v_{n+1} = \prox_{(\!\tau_n/\gamma_n\!)g^*}^{U^{-1}}\Big(v_n+\frac{\tau_n}{\gamma_n}ULp_n\Big) = \prox^{U^{-1}}_{-(\!\tau_n/\gamma_n\!)K(p_n,\cdot)}(v_n),
\end{equation}
inequality \eqref{e:up} yields
\begin{equation}
\label{e:inp}
K(p_n,v)-K(p_n,v_{n+1}) \leq \frac{\gamma_n}{\tau_n}\scal{v-v_{n+1}}{U^{-1}(v_{n+1}-v_n)}.
\end{equation}
Now, simple calculation shows that 
\begin{equation}
K(x_{n+1},v)-K(x_{n+1},v_{n+1}) + K(p_n,v_{n+1})-K(p_n,v) = \scal{x_{n+1}-p_n}{L^*(v-v_{n+1})}.
\end{equation}
Therefore, using \eqref{e:inp} and setting, for every $n\in\NN$, $R_n=(\tau_nU)^{-1}-LP_VL^*$ we obtain
\begin{alignat}{2}
\label{eq:111}
\gamma_nK(x_{n+1},v)-&\gamma_nK(x_{n+1},v_{n+1}) = 
\gamma_n\scal{x_{n+1}-p_n}{L^*(v-v_{n+1})}+ \gamma_n(K(p_n,v)- K(p_n,v_{n+1}))\notag\\
&=  \gamma^{2}_n \scal{v_n-v_{n+1}}{LP_VL^*(v-v_{n+1})}+ \gamma_n(K(p_n,v)- K(p_n,v_{n+1}))\notag\\
&\leq\gamma^{2}_n \scal{v_n-v_{n+1}}{LP_VL^*(v-v_{n+1})}+  \frac{\gamma_n^2}{\tau_n}\scal{v-v_{n+1}}{U^{-1}(v_{n+1}-v_n)}\notag\\
&=\gamma^{2}_n \scal{v_{n+1}-v_{n}}{\big((\tau_nU)^{-1}-LP_VL^*\big)(v-v_{n+1})}\notag\\
&=\gamma_n^2 \Big(\frac12\|v_n-v \|_{R_n}^2-\frac12 \|v_{n+1}-v \|_{R_n}^2-\frac12 \|v_{n+1}-v_n \|_{R_n}^2\Big)\notag\\
\end{alignat}
Now, by adding \eqref{eq:111} and \eqref{eq:222}, and taking into account that $(\gamma_n)_{n\in\NN}$ is decreasing
and $(\tau_n)_{n\in\NN}$ is increasing, we get
\begin{alignat}{2}
\gamma_n\big(K(x_{n+1},v)\!-\!K(x,v_{n+1}) \big)
&\leq \frac{1}{2}\big( \| x_n-x\|^2\! -\! \|x_{n+1}-x\|^2 \big) 
+\frac{\gamma^2_n}{2}\|v_n-v \|_{R_n}^2\!-\!\frac{\gamma^2_{n+1}}{2} \|v_{n+1} - v \|_{R_{n+1}}^2 \notag\\
&\quad-\gamma_n\scal{r_{n} - \nabla h(x_{n})}{\overline{x}_{n+1}-x} + \gamma_n\sigma_n\|r_{n} - \nabla h(x_{n}) \|^2.
\end{alignat}
Since $K(\cdot,\cdot)$ is convex-concave, we have 
\begin{alignat}{2}
\label{basi}
K(\tilde{x}_N,v)-K(x,\tilde{v}_N) &\leq \Big(\sum_{n=0}^N\gamma_n\Big)^{-1} \sum_{n=0}^N \gamma_n(K(x_{n+1},v)-K(x,v_{n+1}) )\notag\\
&\leq \frac{1}{2}\Big(\sum_{n=0}^N\gamma_n\Big)^{-1}\Big(\| x_0-x\|^2 +\gamma^2_0\|v_0-v \|_{G_0}^2\notag\\
&\quad+ \sum_{n=0}^N\big(\gamma_n\sigma_n\|r_{n} - \nabla h(x_{n}) \|^2- \gamma_n\scal{r_{n} - \nabla h(x_{n})}{\overline{x}_{n+1}-x}\big)\Big).
\end{alignat}
Since for every $n\in\NN$, $\overline{x}_{n+1}$ is $\FF_n$-measurable, we have
\begin{alignat}{2}
(\forall n\in\NN)\quad \E[\scal{r_{n} - \nabla h(x_{n})}{\overline{x}_{n+1}-x}] 
&= \E[\E[\scal{r_{n} - \nabla h(x_{n})}{\overline{x}_{n+1}-x}|\FF_n]]\notag\\
&= \E[\scal{\E[r_{n} - \nabla h(x_{n})|\FF_n]}{\overline{x}_{n+1}-x}]\notag\\
&=0.
\end{alignat}
Therefore, by taking the expectation of both sides of \eqref{basi}, we obtain
\begin{alignat}{2}
\E[K(\tilde{x}_N,v)&-K(x,\tilde{v}_N) ] \notag \\
&\leq  \frac{1}{2}\Big(\sum_{n=0}^N\gamma_n\Big)^{-1}\Big(\E[\| x_0-x\|^2 +\gamma^2_0\|v_0-v \|_{G_0}^2]+ \sum_{n=0}^N\big(\gamma_n\sigma_n\E[\|r_{n} - \nabla h(x_n) \|^2])\Big)\notag\\
&\leq \frac{1}{2}\Big(\sum_{n=0}^N\gamma_n\Big)^{-1}\Big(\E[\| x_0-x\|^2 +\gamma^2_0\|v_0-v \|_{G_0}^2]+ 2((\gamma_0\tau\|U\|)^{1/2}\|L\|^2+1) c_0\Big),
\end{alignat} 
which proves \eqref{e:res1}. 
\end{proof}

\begin{corollary} 
Under the assumptions of Theorem~\ref{thm:erg}, suppose that $\sum_{n\in\NN}\gamma_n=+\infty$,
and that  $\mathcal{P}_V$ and $\mathcal{D}_V$ are bounded.
Let $\alpha=\sup_{(\overline{x},\overline{v})\in \mathcal{P}_V\times\mathcal{D}_V} \E[\| x_0-\overline{x}\|^2 +\gamma^2_0\|v_0-\overline{v} \|_{G_0}^2] + 2((\gamma_0\tau\|U\|)^{1/2}\|L\|^2+1) c_0$. Let $\varepsilon >0$, and let $N_\varepsilon\in\NN$ be such that $\sum_{n=1}^{N_\varepsilon}\gamma_n\leq 2/(\alpha\varepsilon)$.
Then $(\tilde{x}_{N_\varepsilon},\tilde{v}_{N_\varepsilon})$ is an $\varepsilon$-saddle point in expectation. 
\end{corollary}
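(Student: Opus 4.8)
The plan is to specialize Theorem~\ref{thm:erg} to the case where the reference pair $(x,v)$ is taken in the solution set $\mathcal{P}_V\times\mathcal{D}_V$, and then to read off the claim from the definition~\eqref{e:esaddle}. First I would check that this specialization is admissible: Theorem~\ref{thm:erg} requires $x\in V$ and $v\in\dom g^*$. The inclusion $\mathcal{P}_V\subset V$ is immediate since~\eqref{primal1} minimizes over $V$. For $\mathcal{D}_V\subset\dom g^*$, note that $\mathcal{P}_V\neq\varnothing$ forces the optimal value of the dual~\eqref{dual1} to be finite (by Fenchel--Rockafellar duality, under the standing qualification conditions), so any $\overline{v}\in\mathcal{D}_V$ must satisfy $g^*(\overline{v})<+\infty$, i.e.\ $\overline{v}\in\dom g^*$. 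Hence~\eqref{e:res1} is valid for every $(\overline{x},\overline{v})\in\mathcal{P}_V\times\mathcal{D}_V$.

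Next I would bound the constant appearing in~\eqref{e:res1} uniformly over the solution set. Since $c(\overline{x},\overline{v})$ equals $\E[\|x_0-\overline{x}\|^2+\gamma_0^2\|v_0-\overline{v}\|_{(\tau_0U)^{-1}-LP_VL^*}^2]$ plus the $(\overline{x},\overline{v})$-independent term $2((\tau\gamma_0\|U\|)^{1/2}\|L\|^2+1)c_0$, taking the supremum over $(\overline{x},\overline{v})\in\mathcal{P}_V\times\mathcal{D}_V$ gives exactly $c(\overline{x},\overline{v})\leq\alpha$, and $\alpha<+\infty$ because $\mathcal{P}_V$, $\mathcal{D}_V$ are bounded, $x_0$, $v_0$ are square integrable, and $c_0<+\infty$. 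Feeding this into~\eqref{e:res1} yields, for every $N\in\NN$,
\begin{equation*}
\sup_{(\overline{x},\overline{v})\in\mathcal{P}_V\times\mathcal{D}_V}\E[K(\tilde{x}_N,\overline{v})-K(\overline{x},\tilde{v}_N)]\leq\frac{\alpha}{2}\Big(\sum_{n=0}^N\gamma_n\Big)^{-1}.
\end{equation*}

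Finally, since $\sum_{n\in\NN}\gamma_n=+\infty$, the partial sums diverge, so I can choose $N_\varepsilon\in\NN$ with $\sum_{n=0}^{N_\varepsilon}\gamma_n\geq\alpha/(2\varepsilon)$; for that index the right-hand side of the last display is $\leq\varepsilon$, which is precisely~\eqref{e:esaddle} with $z=\tilde{x}_{N_\varepsilon}$ and $u=\tilde{v}_{N_\varepsilon}$. I expect the only point needing care to be the inclusion $\mathcal{D}_V\subset\dom g^*$ — so that Theorem~\ref{thm:erg} can actually be invoked at dual optimizers — together with the accompanying finiteness of $\alpha$; everything else is bookkeeping on top of the already established estimate~\eqref{e:res1} and the divergence of the step-size series.
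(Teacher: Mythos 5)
Your proposal is correct and follows essentially the same route as the paper, which simply invokes \eqref{e:res1} together with the uniform bound $\sup_{(\overline{x},\overline{v})\in\mathcal{P}_V\times\mathcal{D}_V}c(\overline{x},\overline{v})\leq\alpha$; your additional verification that $\mathcal{P}_V\subset V$ and $\mathcal{D}_V\subset\dom g^*$ is a reasonable piece of due diligence the paper leaves implicit. You also correctly read the threshold condition as $\sum_{n=0}^{N_\varepsilon}\gamma_n\geq\alpha/(2\varepsilon)$, which is what the argument actually requires (the inequality as printed in the statement appears to be a typo).
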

\begin{proof}
The conclusion directly follows from \eqref{e:res1}, noting that $\sup_{(\overline{x},\overline{v})\in \mathcal{P}_V\times\mathcal{D}_V}c(\overline{x},\overline{v})\leq \alpha$.
\end{proof}

\section{Application to sum of composite inclusions}
\label{sec:appl}
Based on the standard product space reformulation technique, 
one can apply the proposed framework to more general composite inclusions. As an illustration,
we present below an application to the sum of composite operators and a cocoercive operator \cite{optim2,aicm1,jota1}.
\begin{problem}
\label{prob1} 
Let $\HH$ be a real Hilbert space,
let $m$ be a strictly positive integer and let $(\omega_i)_{1\leq i\leq m}
\in\left[0,1\right]^m$ be such that $\sum_{i=1}^m\omega_i=1$,
 let $C\colon\HH\to\HH$ 
be $\mu$-cocoercive for some 
$\mu\in\left]0,+\infty\right[$.
For every $i\in\{1,\ldots, m\}$, let $\GG_i$ be a real Hilbert 
space,  let $A_i\colon \GG_i\to2^{\GG_i}$ be maximally monotone, 
and suppose that $L_i\colon\HH \to\GG_i$ 
is a nonzero bounded linear operator. 
The problem is to solve the primal inclusion
\begin{equation}\label{e:primal1}
\text{find $\overline{x}\in\HH$ such that}\; 
0\in \sum_{i=1}^m\omega_iL^{*}_i
A_i(L_i\overline{x})+C\overline{x},
\end{equation}
together with the dual inclusion
\begin{alignat}{2}\label{e:dual1}
&\text{find $\overline{v}_1 \in \GG_1,\ldots, \overline{v}_m \in \GG_m$ 
such that }\notag \\
&\hspace{2cm}
(\exists x\in\HH)\quad-Cx=\sum_{i=1}^m\omega_i L_{i}^*\overline{v}_i \quad \text{and}\quad
(\forall i\in\{1,\ldots,m\})\quad \overline{v}_i\in
 A_{i}(L_ix).
\end{alignat}
We denote by $\mathcal{P}_1$ and $\mathcal{D}_1$ the sets of solutions 
to \eqref{e:primal1} and~\eqref{e:dual1}, respectively.
\end{problem}
Let us recall the following facts that show that  Problem \ref{prob1} is a special case of  
Problem \ref{p:prob} (see also \cite[Theorem 3.8]{siop2} for the case where $C$ is zero).
\begin{lemma} 
\label{l:lem1}
Define $\HHH=\HH^m$ and $\GGG = \GG_1\oplus\cdots\oplus \GG_m$, endowed with the scalar product
\begin{equation}
\label{e:scal}
(\forall (\vv,\ww)\in\GGG^2)\quad\scal{\vv}{\ww}_{\GGG}=\sum_{i=1}^m \omega_i \scal{v_i}{w_i}
\end{equation}
 and 
\begin{equation}
\begin{cases}
\VV =\menge{\xx\in\HHH}{x_1=\ldots=x_m}\notag\\
\AAA =  {\cart}_{k=1}^mA_i\\
\CCC \colon \HHH\to\HHH\colon \xx \mapsto (Cx_1,\ldots, Cx_m)\\
\LL \colon \HHH\to\GGG\colon \xx\mapsto (L_1x_1,\ldots,L_mx_m).
\end{cases}
\end{equation}
Then the following hold.
\begin{enumerate} 
\item \label{ab1}
$\CCC$ is $\mu$-cocoercive.
\item  \label{ab2}
$x$ solves \eqref{e:primal1} if and only if $(x,\ldots,x)\in \zer( \CCC + \LL^*\AAA\LL + N_{\VV})$. 
\end{enumerate}
\end{lemma}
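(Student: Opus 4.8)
The plan is to verify the two claims by unwinding the product-space definitions and matching them against the corresponding objects in Problem~\ref{p:prob}. For part~\ref{ab1}, I would first note that $\CCC$ acts coordinatewise by $C$, so for $\xx=(x_i)$ and $\yy=(y_i)$ in $\HHH$ one has $\scal{\CCC\xx-\CCC\yy}{\xx-\yy}_{\HHH}=\sum_{i=1}^m\omega_i\scal{Cx_i-Cy_i}{x_i-y_i}$ once $\HHH$ is given the weighted inner product analogous to \eqref{e:scal}; applying $\mu$-cocoercivity of $C$ in each coordinate and summing with the weights $\omega_i$ (which sum to $1$) gives $\scal{\CCC\xx-\CCC\yy}{\xx-\yy}_{\HHH}\ge\mu\sum_i\omega_i\|Cx_i-Cy_i\|^2=\mu\|\CCC\xx-\CCC\yy\|_{\HHH}^2$, which is exactly $\mu$-cocoercivity of $\CCC$. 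This step is essentially bookkeeping; the only point requiring care is to keep the weighted norm on $\HHH$ consistent throughout.

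For part~\ref{ab2}, the key observations are the standard product-space identities: the normal cone to the diagonal subspace $\VV$ at a point $(x,\dots,x)$ is $\VV^\perp=\{(u_i)\mid\sum_i\omega_iu_i=0\}$ (using the weighted inner product), and the adjoint of $\LL\colon\HHH\to\GGG$ is $\LL^*\colon\vv\mapsto(L_1^*v_1,\dots,L_m^*v_m)$, again with respect to the weighted inner products. Then $(x,\dots,x)\in\zer(\CCC+\LL^*\AAA\LL+N_\VV)$ means there exist $v_i\in A_i(L_ix)$ and $(u_i)\in\VV^\perp$ with $Cx+\omega_i L_i^*v_i\cdot(\text{weight bookkeeping})+u_i=0$ for each $i$; summing over $i$ against the weights kills the $\VV^\perp$ component and yields $0\in\sum_i\omega_iL_i^*A_i(L_ix)+Cx$, i.e. $x$ solves \eqref{e:primal1}. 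Conversely, if $x$ solves \eqref{e:primal1}, pick $v_i\in A_i(L_ix)$ witnessing the inclusion, and set $u_i=-Cx-L_i^*v_i+\sum_j\omega_j(Cx+L_j^*v_j)$, which lies in $\VV^\perp$ and closes the membership; here one should double-check that the componentwise evaluation $\AAA\LL(x,\dots,x)=\cart_i A_i(L_ix)$ matches \cite[Theorem 3.8]{siop2}.

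The main obstacle I anticipate is purely notational: getting the weights $\omega_i$ to sit in exactly the right places once $\HHH$ and $\GGG$ carry weighted inner products, so that $\LL^*$, $N_\VV$, and the pairing in \eqref{e:primal1} are all expressed consistently. Once the correct convention is fixed, both parts follow from \cite[Proposition 2.8 and Theorem 3.8]{siop2} together with the elementary cocoercivity computation above, and I would simply cite \cite[Theorem 3.8]{siop2} for the equivalence in part~\ref{ab2} after confirming that adding the cocoercive term $\CCC$ does not affect the diagonalization argument (it too acts coordinatewise and hence respects $\VV$).
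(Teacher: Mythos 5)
Your proposal is correct and follows essentially the same route as the paper: part (i) is the coordinatewise cocoercivity computation with the weighted inner product on $\HHH$, and part (ii) rests on the same product-space identities $\LL^*\vv=(L_1^*v_1,\dots,L_m^*v_m)$ and $\VV^\perp=\{\xx : \sum_i\omega_i x_i=0\}$, which the paper also uses in a single chain of equivalences. The one spot to tidy is the componentwise inclusion in your forward direction, where the weight should not appear on $L_i^*v_i$ (the $\omega_i$'s are absorbed into the weighted inner products, as your own converse construction of $u_i$ correctly reflects).
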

\begin{proof}
\ref{ab1}: Since $C$ is $\mu$-cocoercive, we have
\begin{alignat}{2}
(\forall \xx\in\HH^m)(\forall \yy\in\HH^m)\quad 
\scal{\CCC\xx-\CCC\yy}{\xx-\yy} &= \sum_{i=1}^m\omega_i\scal{Cx_i-Cy_i}{x_i-y_i}\notag\\
&\geq \mu \sum_{i=1}^m\omega_i\|Cx_i-Cy_i\|^2 \notag\\
&= \mu \|\CCC\xx-\CCC\yy\|^2,
\end{alignat}
which shows that $\CCC$ is $\mu$-cocoercive on $\HHH$.

\ref{ab2}:
We have $\LL^*\colon (v_1,\ldots,v_m) \mapsto (L^{*}_1v_1,\ldots, L^{*}_mv_m)$. 
Moreover, in view of \eqref{e:scal}, $\VV^\perp=\menge{\xx\in\HHH}{\sum_{i=1}^m \omega_ix_i=0}$.
We have
\begin{alignat}{2}
x\; \text{solves \eqref{e:primal1}}\;&\Leftrightarrow 0\in \sum_{i=1}^m\omega_i L^{*}_i
A_i(L_i{x})+C{x}\notag\\
&\Leftrightarrow (\exists \big(y_i)_{1\leq i\leq m}\in \cart_{i=1}^m L^{*}_iA_iL_ix\big)
\quad 0 = \sum_{i=1}^m \omega_i(y_i) + Cx)\notag\\
&\Leftrightarrow (\exists \yy\in \cart_{i=1}^m L^{*}_iA_iL_ix\big)\quad 
-\yy - (Cx,\ldots,Cx) \in \VV^{\perp} = N_{\VV}(x,\ldots,x)\notag\\
&\Leftrightarrow (\exists \yy\in \cart_{i=1}^m L^{*}_iA_iL_ix\big)\quad 
0\in  \yy + (Cx,\ldots,Cx) + N_{\VV}(x,\ldots,x) \notag\\
&\Leftrightarrow 0\in  \LL^*\AAA\LL(x,\ldots,x)  + \CCC(x,\ldots,x) + N_{\VV}(x,\ldots,x) \notag\\
\end{alignat}
which proves \ref{ab2}.
\end{proof}
\begin{corollary}
\label{cor:comp}
 Let $(\gamma_n)_{n\in\NN}$ be a decreasing sequence of strictly positive real numbers
 and let $(\tau_n)_{n\in\NN}$ be an increasing sequence of strictly positive real numbers 
 such that $\sup_{n\in\NN} \tau_n=\tau<+\infty$
For every $i\in\{1,\ldots,m\}$  let $U_i$ be a self adjoint positive definite linear operator
 on $\GG_i$, let $(r_{n})_{n\in\NN}$  be  a $\HH$-valued, squared integrable random process,
 let $x_{0}$ be  a $\HH$-valued, squared integrable random variable and 
let $v_{i,0}$ be  a $\GG_i$-valued, squared integrable random variable.
 Iterate
 \begin{equation}
 \label{e:Tsengaaa1}
 (\forall n\in\NN)\quad
\begin{array}{l}
 \left\lfloor
 \begin{array}{l}
 p_{n} = x_n-\gamma_n \sum_{i=1}^m \omega_i( L^{*}_iv_{i,n} + r_{n})\\
\operatorname{For}\;i=1,\ldots,m\\
\left\lfloor
\begin{array}{l}
 v_{i,n+1} = J_{ (\tau_n/\gamma_n)U_iA^{-1}_i}(v_{i,n}+ \frac{\tau_n}{\gamma_n}U_i L_ip_{n})\\
\end{array}
\right.\\[2mm]
 x_{n+1} = x_{n} - \gamma_n\sum_{i=1}^m\omega_i(L^{*}_iv_{i,n+1} + r_{n})\\
 \end{array}
 \right.\\[2mm]
 \end{array}
 \end{equation}
  Suppose that $\mathcal{P}_1$ is non empty and 
 that the following conditions are satisfied for \\
$\FF_n = \sigma((x_{k},(v_{i,k})_{1\leq i\leq m})_{0\leq k\leq n})$
 \begin{enumerate}
 \item\label{con:i}$(\forall i\in \{1,\ldots,m\})\quad $ $(\tau U_i)^{-1}- L_iL_{i}^*$ is positive definite.
 \item $\gamma\in\left]0,\mu\right[$.
 \item \label{con:iii}$ \E[r_{n}|\FF_n] = Cx_{n}$.
 \item\label{con:iv} $\sum_{n\in\NN} \E[\|r_{n}-Cx_{n}\|^2 |\FF_n] < +\infty $  \quad \text{$\mathsf{P}$-a.s.}
 \end{enumerate}
 Then the following hold for some random vectors $\overline{x}\in\mathcal{P}_1$ a.s. and $(\overline{v}_1,\ldots,\overline{v}_m)\in\mathcal{D}_1$ a.s.
 \begin{enumerate}
\item $(x_{n})_{n\in\NN}$ converges weakly to $\overline{x}$ almost surely. 
\item For every $i\in \{1,\ldots,m\}$, $(v_{i,n})_{n\in\NN}$ converges weakly to $\overline{v}_i$ a.s.
 \item $\sum_{n\in\NN}\|Cx_n-Cx\|^2 < +\infty$ a.s.
\end{enumerate}
\end{corollary}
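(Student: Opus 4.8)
The plan is to obtain Corollary~\ref{cor:comp} as a specialization of Theorem~\ref{t:1} through the product-space reformulation of Lemma~\ref{l:lem1}. I would work in $\HHH=\HH^m$ and $\GGG=\GG_1\oplus\cdots\oplus\GG_m$, each equipped with the $(\omega_i)$-weighted inner product (for $\GGG$ this is \eqref{e:scal}; for $\HHH$ it is $\scal{\xx}{\yy}_{\HHH}=\sum_{i=1}^m\omega_i\scal{x_i}{y_i}$, the product used implicitly in the proof of Lemma~\ref{l:lem1}\ref{ab1}), set $\VV$, $\AAA=\cart_{i=1}^mA_i$, $\CCC$, $\LL$ as in Lemma~\ref{l:lem1}, let $\UU\in\BL(\GGG)$ be the block operator $(v_i)_i\mapsto(U_iv_i)_i$, and lift the estimate to $\rr_n=(r_n,\ldots,r_n)\in\HHH$. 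With respect to these inner products, $\LL^*\colon(v_i)_i\mapsto(L_i^*v_i)_i$, the operator $\UU$ is self-adjoint and strongly positive, and $P_{\VV}$ maps $(u_i)_i$ to the constant tuple with entry the weighted barycenter $\sum_i\omega_iu_i$. Since $\xx_0=(x_0,\ldots,x_0)\in\VV$ and every subsequent iterate of Algorithm~\ref{algo:1} is a projection onto $\VV$, all iterates have the form $(x_n,\ldots,x_n)$; substituting the identities above into \eqref{e:Tsengaaa} and using $\sum_i\omega_i=1$ turns its first and third lines into the first and last lines of \eqref{e:Tsengaaa1} (with $\pp_n=(p_n,\ldots,p_n)$), while block-diagonality of $\UU\AAA^{-1}$ makes $J_{(\tau_n/\gamma_n)\UU\AAA^{-1}}$ split coordinatewise into the $J_{(\tau_n/\gamma_n)U_iA_i^{-1}}$, recovering the inner loop. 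Also $\sigma((\xx_k,\vv_k)_{0\le k\le n})=\FF_n$, and the primal solution set $\mathcal P$ of the reformulated Problem~\ref{p:prob} is nonempty, being the diagonal image of $\mathcal P_1$ by Lemma~\ref{l:lem1}\ref{ab2}.

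Next I would verify the hypotheses of Theorem~\ref{t:1}. Condition~\ref{con:t:1i} is the assumed monotonicity of $(\gamma_n)$ and $(\tau_n)$; \ref{con:t:1iii} holds with cocoercivity constant $\beta=\mu$ by Lemma~\ref{l:lem1}\ref{ab1}; \ref{con:t:1iv} reads $\E[\rr_n\mid\FF_n]=(Cx_n,\ldots,Cx_n)=\CCC\xx_n$, i.e.\ hypothesis~\ref{con:iii}; and since $\|\rr_n-\CCC\xx_n\|_{\HHH}^2=\sum_i\omega_i\|r_n-Cx_n\|^2=\|r_n-Cx_n\|^2$, condition~\ref{con:t:1v} is precisely \ref{con:iv}. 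The only substantive point, which I expect to be the main obstacle, is the positive definiteness of $(\tau\UU)^{-1}-\LL P_{\VV}\LL^*$ on $\GGG$ demanded by \ref{con:t:1ii}. I would derive it from the blockwise hypotheses~\ref{con:i}: for $\vv=(v_i)_i\in\GGG$,
\[
\scal{\LL P_{\VV}\LL^*\vv}{\vv}_{\GGG}=\Big\|\sum_{i=1}^m\omega_iL_i^*v_i\Big\|^2\le\sum_{i=1}^m\omega_i\|L_i^*v_i\|^2=\sum_{i=1}^m\omega_i\scal{L_iL_i^*v_i}{v_i},
\]
the inequality being convexity of $\|\cdot\|^2$ together with $\sum_i\omega_i=1$; hence $(\tau\UU)^{-1}-\LL P_{\VV}\LL^*$ dominates, in $\GGG$, the block-diagonal operator with blocks $(\tau U_i)^{-1}-L_iL_i^*$, which is strongly positive by~\ref{con:i}.

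Finally, Theorem~\ref{t:1} yields, almost surely, a $\mathcal P$-valued $\xx$ and a $\mathcal D$-valued $\vv$ with $\xx_n\weakly\xx$, $\vv_n\weakly\vv$ and $\sum_n\|\CCC\xx_n-\CCC\xx\|_{\HHH}^2<+\infty$. As $\VV$ is a closed, hence weakly closed, subspace containing all iterates, $\xx\in\VV$, so $\xx=(\overline x,\ldots,\overline x)$ with $\overline x\in\mathcal P_1$ by Lemma~\ref{l:lem1}\ref{ab2}; a computation on the dual analogous to the one in the proof of that lemma (cf.\ \cite[Theorem 3.8]{siop2}) shows that $\mathcal D$ is the natural copy of $\mathcal D_1$, so $\vv=(\overline v_1,\ldots,\overline v_m)$ with $(\overline v_1,\ldots,\overline v_m)\in\mathcal D_1$. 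Testing the weak convergence in $\HHH$ against $(y,\ldots,y)$, $y\in\HH$, gives $x_n\weakly\overline x$; testing the one in $\GGG$ against the tuple whose only nonzero entry is $w\in\GG_i$, in position $i$, gives $v_{i,n}\weakly\overline v_i$ (using $\omega_i>0$); and $\|\CCC\xx_n-\CCC\xx\|_{\HHH}^2=\|Cx_n-C\overline x\|^2$ gives the last claim of the corollary. Apart from the positive-definiteness step above, the only care needed is in tracking which inner product each adjoint and projection is formed with respect to.
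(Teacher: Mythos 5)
Your proposal is correct and follows essentially the same route as the paper: lift the iteration to the weighted product spaces $\HHH$ and $\GGG$ via Lemma~\ref{l:lem1}, recognize \eqref{e:Tsengaaa1} as an instance of Algorithm~\ref{algo:1}, invoke Theorem~\ref{t:1}, and then unpack the dual limit to land in $\mathcal{D}_1$. The one place you go beyond the paper is welcome: the paper merely asserts that ``every specific condition in Theorem~\ref{t:1} is satisfied,'' whereas you actually verify the positive definiteness of $(\tau\UU)^{-1}-\LL P_{\VV}\LL^*$ from the blockwise hypotheses via $\scal{\LL P_{\VV}\LL^*\vv}{\vv}_{\GGG}=\|\sum_{i}\omega_iL_i^*v_i\|^2\le\sum_i\omega_i\scal{L_iL_i^*v_i}{v_i}$, which is exactly the missing computation.
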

\begin{proof} 
Let us define 
\begin{equation}
\UU\colon\GGG\to \GGG\colon (v_1,\ldots,v_m) \mapsto (U_1v_1,\ldots,U_mv_m).
\end{equation}
Then $\UU$ is self adjoint, positive definite on $\GGG$. Moreover, for every $n\in\NN$,
\begin{equation}
(\forall \vv =(v_1,\ldots,v_m) \in\GGG)\quad J_{(\tau_n/\gamma_n)\UU\AAA^{-1}}\vv = (J_{(\tau_n/\gamma_n)U_iA^{-1}_i}v_i)_{1\leq i\leq m}.
\end{equation}
  We recall that 
\begin{equation}
(\forall \xx\in \HHH)\quad P_{\VV}\xx = \Big(\sum_{i=1}^m\omega_ix_i\Big)_{1\leq k\leq m}.
\end{equation}
Therefore, upon setting
\begin{equation}
\begin{cases}
\rr_n=(r_n,\ldots,r_n)\in \VV\\
\xx_n= (x_n,\ldots,x_n)\in\VV\\
\vv_n = (v_{1,n},\ldots,v_{m,n})\\
\pp_n = (p_n,\ldots,p_n)\in\VV,
\end{cases}
\end{equation}
we can rewrite \eqref{e:Tsengaaa1} as 
\begin{equation}
 \label{e:Tsengaaa2}
 (\forall n\in\NN)\quad
\begin{array}{l}
 \left\lfloor
 \begin{array}{l}
 \pp_{n} = P_{\VV}( \xx_n-\gamma_n ( \LL^{*}\vv_{n} + \rr_{n}))\\
 \vv_{n+1} = J_{ (\tau_n/\gamma_n)\UU\AAA^{-1}}(\vv_{n}+\frac{\tau_n}{\gamma_n} \UU \LL\pp_{n})\\
 \xx_{n+1} = P_{\VV}( \xx_{n} - \gamma_n(\LL^{*}\vv_{n+1} + \rr_{n})),\\
 \end{array}
 \right.\\[2mm]
 \end{array}
 \end{equation}
 which is a special case of \eqref{e:Tsengaa}. By Lemma \ref{l:lem1},
Problem \ref{prob1} is a special case of  Problem \ref{p:prob} Moreover, every specific condition in Theorem \ref{t:1} is satisfied. 
Therefore, the first and the third conclusions follow from Theorem \ref{t:1}. We prove the second one.  By Theorem \ref{t:1}, 
$(\vv_n)_{n\in\NN}$ converge weakly to $\vv$ a.s., such that 
\begin{equation}
\label{e:diem}
0\in -\LL(N_{\VV}+\CCC)^{-1}(-\LL^*\vv) +\AAA^{-1}\vv.
\end{equation}
We now prove that $\vv\in\mathcal{D}_1$. We have 
\begin{alignat}{2}
\eqref{e:diem}& \Rightarrow (\exists \xx\in\VV)\quad -\LL^*\vv \in N_{\VV}\xx +\CCC\xx \quad 
\text{and}\quad \vv\in\AAA\LL\xx.\notag\\
 &\Leftrightarrow (\exists \xx\in\VV)\quad -\LL^*\vv -\CCC\xx\in\VV^{\perp} \quad 
\text{and}\quad (\forall i\in\{1,\ldots, m\})\; v_i\in A_i L_ix.\notag\\
 &\Leftrightarrow (\exists x\in \HH)\quad -\sum_{i=1}^m\omega_iL_iv_i = Cx \quad 
\text{and}\quad (\forall i\in\{1,\ldots, m\})\; v_i\in A_i L_ix.\notag\\
&\Leftrightarrow \vv = (v_i)_{1\leq i\leq m}\in \mathcal{D}_1.
\end{alignat}
\end{proof}

\subsection{Structured minimization problems}
We next show how the previous result can be specialized to the case of minimization problems
involving the sum of composite functions \cite{ComPes12,optim2}. 
\begin{problem}
\label{prob2} 
Let $\HH$ be a real Hilbert space,
let $m$ be a strictly positive integer and let $(\omega_i)_{1\leq i\leq m}
\in\left[0,1\right]^m$ be such that $\sum_{i=1}^m\omega_i=1$,
 let $h\in\Gamma_0(\HH)$ be differentiable, with
$1/\mu$-Lipschitz continuous gradient for some 
$\mu\in\left]0,+\infty\right[$.
For every $i\in\{1,\ldots, m\}$, let $\GG_i$ be a real Hilbert 
space,  let $g_i\in\Gamma_0(\GG_i)$ be maximally monotone, 
and suppose that $L_i\colon\HH \to\GG_i$ 
is a nonzero bounded linear operator. 
The problem is to 
\begin{equation}\label{e:primalmin}
\minimize{x\in\HH}{\sum_{i=1}^m \omega_i g_i(L_i x) +h(x)}
\end{equation}
together with the dual
\begin{equation}
\label{e:dualmin}
\minimize{{v}_1 \in \GG_1,\ldots, {v}_m \in \GG_m} {\sum_{i=1}^m \omega_i g_i^*(v_i)+h^*\Big(-\sum_{i=1}^m\omega_i L^*_i v_i\Big).}
\end{equation}
under the assumption that problem~\eqref{e:primalmin} has at least a solution. 
\end{problem}
As in Section~\ref{sec:erg} we will also consider the saddle point formulation of Problem~\ref{prob2}.
\begin{equation}
H(x,v_1,\ldots,v_m)= h(x)+\sum_{i=1}^m \omega_i \scal{L^*_iv_i}{x} -\sum_{i=1}^m \omega_i g^*_i(v_i)
\end{equation}
The following algorithm is a special case of  Algorithm \ref{algo:1}.
\begin{algorithm}
\label{algo:min}
Let $(\gamma_n)_{n\in\NN}$ be a decreasing sequence of strictly positive real numbers, 
let $(\tau_n)_{n\in\NN}$ be an increasing sequence of strictly positive real numbers,
let $U$ be a self adjoint positive definite linear operator on $\GG$,
let $(r_{n})_{n\in\NN}$  be  a $\HH$-valued, square integrable random process,
let $x_{0}$ be  a $\HH$-valued, squared integrable random variable and 
let $v_{0}$ be  a $\GG$-valued, squared integrable random variable.
Iterate
\begin{equation}
 \label{e:Tsengaaamin}
 (\forall n\in\NN)\quad
\begin{array}{l}
 \left\lfloor
 \begin{array}{l}
 p_{n} = x_n-\gamma_n \sum_{i=1}^m \omega_i( L^{*}_iv_{i,n} + r_{n})\\
\operatorname{For}\;i=1,\ldots,m\\
\left\lfloor
\begin{array}{l}
 v_{i,n+1} = \prox^{(\tau_n/\gamma_n)U^{-1}_i}_{g^*_i}(v_{i,n}+\frac{\tau_n}{\gamma_n} U_i L_ip_{n})\\
\end{array}
\right.\\[2mm]
 x_{n+1} = x_{n} - \gamma_n\sum_{i=1}^m\omega_i(L^{*}_iv_{i,n+1} + r_{n})\\
 \end{array}
 \right.\\[2mm]
 \end{array}
 \end{equation}
\end{algorithm}

The following result is a direct consequence of Corollary~\ref{cor:comp}.
\begin{corollary}
In the setting of Problem~\ref{prob2}, suppose that 
\begin{equation}
0 \in \zer\Big(\nabla h + \sum_{i=1}^{m}\omega_i L_{i}^*\partial g_i(L_i\cdot) \Big)
\end{equation}
Let $(x_n,v_n)_{n\in\NN}$ be the sequence generated by Algorithm~\ref{algo:min}. 
 Suppose that the following conditions are satisfied for 
$\FF_n = \sigma((x_{k},(v_{i,k})_{1\leq i\leq m})_{0\leq k\leq n})$
 \begin{enumerate}
 \item there exists $\tau\in\left]0,+\infty\right[$ such that  $\sup_{n\in\NN}\tau_n\leq \tau$ and $(\forall i\in \{1,\ldots,m\})\quad $ $(\tau U_i)^{-1}- L_iL_{i}^*$ is positive definite.
 \item $\gamma_0\in\left]0,\mu\right[$ and $\inf\gamma_n>0$.
 \item $ \E[r_{n}|\FF_n] = \nabla h(x_{n})$.
 \item $\sum_{n\in\NN} \E[\|r_{n}-\nabla h(x_{n})\|^2 |\FF_n] < +\infty $  \quad \text{$\mathsf{P}$-a.s.}
 \end{enumerate}
 Then the following hold for some random vector $\overline{x}$ solving \eqref{e:primalmin} and some
 random vector $(\overline{v}_1,\ldots,\overline{v}_m)$ solving \eqref{e:dualmin} a.s.
 \begin{enumerate}
\item $(x_{n})_{n\in\NN}$ converges weakly to $\overline{x}$ almost surely. 
\item For every $i\in \{1,\ldots,m\}$ $(v_{i,n})_{n\in\NN}$ converges weakly to $\overline{v}_i$ a.s.
 \item $\sum_{n\in\NN}\|\nabla h(x_n)-\nabla h(x)\|^2 < +\infty$ a.s.
\end{enumerate}
\end{corollary}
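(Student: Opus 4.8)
The plan is to recognize Algorithm~\ref{algo:min} as the instance of the algorithm in Corollary~\ref{cor:comp} obtained by taking $C=\nabla h$ and, for every $i\in\{1,\ldots,m\}$, $A_i=\partial g_i$, and then to invoke Corollary~\ref{cor:comp} essentially verbatim. First I would check that this data fits the framework of Problem~\ref{prob1}. Since $h\in\Gamma_0(\HH)$ is differentiable with $1/\mu$-Lipschitz continuous gradient, the Baillon--Haddad theorem \cite{livre1} yields that $\nabla h$ is $\mu$-cocoercive, so $C=\nabla h$ is an admissible cocoercive operator; for every $i$, $g_i\in\Gamma_0(\GG_i)$ makes $A_i=\partial g_i$ maximally monotone, and \eqref{e:equi} gives $A_i^{-1}=(\partial g_i)^{-1}=\partial g_i^*$; the operators $L_i$ are unchanged. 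For this choice, the set $\zer\big(\nabla h+\sum_{i=1}^m\omega_iL_i^*\partial g_i(L_i\cdot)\big)$ is exactly the primal solution set $\mathcal{P}_1$ of Problem~\ref{prob1}, so the standing assumption entails $\mathcal{P}_1\neq\emp$.

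Next I would match the two iterations. Using the identity $\prox_f^{W}=J_{W^{-1}\partial f}$ together with $A_i^{-1}=\partial g_i^*$, each proximal update in \eqref{e:Tsengaaamin} amounts to the resolvent update $v_{i,n+1}=J_{(\tau_n/\gamma_n)U_iA_i^{-1}}(\cdot)$ appearing in \eqref{e:Tsengaaa1}, while the primal step and the concluding forward step are literally the same; hence Algorithm~\ref{algo:min} is the algorithm of Corollary~\ref{cor:comp} run with $C=\nabla h$ and $A_i=\partial g_i$. It then remains only to match hypotheses: the positive definiteness of $(\tau U_i)^{-1}-L_iL_i^*$ is the first assumption of Corollary~\ref{cor:comp}; the requirement $\gamma_0\in\left]0,\mu\right[$ with $\inf_{n}\gamma_n>0$ is its second assumption; and the conditional unbiasedness $\E[r_n|\FF_n]=\nabla h(x_n)$ together with $\sum_n\E[\|r_n-\nabla h(x_n)\|^2|\FF_n]<+\infty$ are its third and fourth assumptions, with $\nabla h$ in the role of $C$. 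Applying Corollary~\ref{cor:comp} then produces, almost surely, a limit $\overline{x}\in\mathcal{P}_1$ of $(x_n)_{n\in\NN}$, limits $\overline{v}_i$ of $(v_{i,n})_{n\in\NN}$ with $(\overline{v}_1,\ldots,\overline{v}_m)\in\mathcal{D}_1$, and $\sum_n\|\nabla h(x_n)-\nabla h(\overline{x})\|^2<\pinf$.

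The one step that is not a pure substitution is translating membership in $\mathcal{P}_1$ and $\mathcal{D}_1$ into optimality for \eqref{e:primalmin}--\eqref{e:dualmin}, and I expect this to be the only place calling for an argument. For the primal, $\overline{x}\in\mathcal{P}_1$ means $0\in\nabla h(\overline{x})+\sum_i\omega_iL_i^*\partial g_i(L_i\overline{x})$, and the sum and chain rules give the inclusion $\nabla h+\sum_i\omega_iL_i^*\partial g_i(L_i\cdot)\subseteq\partial\big(h+\sum_i\omega_ig_i\circ L_i\big)$ --- only this inclusion is needed, so no constraint qualification is required --- whence $0\in\partial\big(h+\sum_i\omega_ig_i\circ L_i\big)(\overline{x})$, and Fermat's rule together with convexity make $\overline{x}$ a solution of \eqref{e:primalmin}. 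For the dual, $(\overline{v}_1,\ldots,\overline{v}_m)\in\mathcal{D}_1$ supplies (by definition \eqref{e:dual1}) a point $x$ with $-\nabla h(x)=\sum_i\omega_iL_i^*\overline{v}_i$ and $\overline{v}_i\in\partial g_i(L_ix)$, equivalently $L_ix\in\partial g_i^*(\overline{v}_i)$ and $x\in\partial h^*\big(-\sum_i\omega_iL_i^*\overline{v}_i\big)$; substituting these relations into the subdifferential characterization of the minimizers of \eqref{e:dualmin} shows that $(\overline{v}_1,\ldots,\overline{v}_m)$ solves the dual problem. Since every ingredient --- cocoercivity of $\nabla h$ via Baillon--Haddad, the prox/resolvent identification, and convex subdifferential calculus --- is routine, the three claimed convergence statements follow immediately once Corollary~\ref{cor:comp} is applied.
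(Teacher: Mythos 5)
Your proposal is correct and follows exactly the route the paper intends: the paper offers no written proof beyond declaring the result ``a direct consequence of Corollary~\ref{cor:comp}'', and your specialization $C=\nabla h$, $A_i=\partial g_i$ with Baillon--Haddad for cocoercivity, the identity $\prox_{g_i^*}^{W}=J_{W^{-1}\partial g_i^*}$ together with $(\partial g_i)^{-1}=\partial g_i^*$, and the qualification-free subdifferential inclusions to pass from $\mathcal{P}_1$, $\mathcal{D}_1$ to minimizers of \eqref{e:primalmin}--\eqref{e:dualmin} is precisely the intended argument. You have in fact supplied more detail than the paper does, and all of it checks out.
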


\begin{corollary}
In the setting of Problem~\ref{prob2}, let $(x_n,v_n)_{n\in\NN}$ be the sequence generated by Algorithm~\ref{algo:min}. 
For every $N\in\NN$, define
\begin{align}
\nonumber
\hspace{-3cm}\tilde{x}_{N} &=  \Big(\sum_{n=0}^N\gamma_n\Big)^{-1}\sum_{n=0}^N \gamma_n x_{n+1}\\
\hspace{-3cm}\nonumber (\forall i\in\{1,\ldots,m\})\quad \tilde{v}_{i,N} &=  \Big(\sum_{n=0}^N\gamma_n\Big)^{-1}\sum_{n=0}^N \gamma_n v_{n+1}.
\end{align}
Suppose  that the following conditions are satisfied for $\FF_n = \sigma((x_k,v_k)_{0\leq k\leq n})$
\begin{enumerate}
\item 
$\gamma_{0}\in\left]0,\mu\right[$. 
\item
there exists $\tau\in\left]0,+\infty\right[$ such that  $\sup_{n\in\NN}\tau_n\leq \tau$ and, for every $i\in\{1,\ldots,m\}$,  the operator $(\tau U_i)^{-1}-LP_VL^*$ is positive semidefinite.
\item  $(\forall n\in\NN)\; \E[r_n |\FF_n] = \nabla h(x_n)$.
\item $c_0=\sum_{n\in\NN}\gamma_{n}^2 \E[\|r_{n}-\nabla h(x_{n})\|^2] < \infty$.
\end{enumerate}
Let $x\in \HH$ and let $\vv=(v_i)_{1\leq i\leq m}\in\cart_{i=1}^m\dom g^*_i$.
Set $c(x,\vv)= \E[\| x_0-x\|^2 +\gamma_0^2 \sum_{i=1}^m \omega_i\|v_{i,0}-v_{i}\|_{(\tau_0U_i^{-1}-LP_VL^*)}^2]
+ 2((\gamma_0\tau\|U\|)^{1/2} \|L\|^2+1)c_0.$ 
Then 
\begin{equation}
\label{e:res1b}
\E[H(\tilde{x}_N,(v_1,\ldots,v_m)) - H(x,(\tilde{v}_{1,N},\ldots,\tilde{v}_{m,N})) ]\leq \frac{c(x,\vv)}{2}\Big(\sum_{n=0}^N\gamma_n\Big)^{-1}.
\end{equation}
\end{corollary}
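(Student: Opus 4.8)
The plan is to derive \eqref{e:res1b} from Theorem~\ref{thm:erg} via the product-space reformulation already used in the proof of Corollary~\ref{cor:comp}. First I would reuse the spaces and operators of Lemma~\ref{l:lem1}: put $\HHH=\HH^m$ with scalar product $\scal{\xx}{\yy}_{\HHH}=\sum_{i=1}^m\omega_i\scal{x_i}{y_i}$, put $\GGG=\GG_1\oplus\cdots\oplus\GG_m$ with the scalar product \eqref{e:scal}, set $\VV=\menge{\xx\in\HHH}{x_1=\cdots=x_m}$, $\LL\colon\xx\mapsto(L_1x_1,\ldots,L_mx_m)$, $\UU\colon\vv\mapsto(U_1v_1,\ldots,U_mv_m)$, and introduce $\hh\colon\HHH\to\RR\colon\xx\mapsto\sum_{i=1}^m\omega_i h(x_i)$ and $\gggg\colon\GGG\to\RX\colon\vv\mapsto\sum_{i=1}^m\omega_i g_i(v_i)$. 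One checks that $\hh$ is convex differentiable with $\nabla\hh(\xx)=(\nabla h(x_i))_{1\le i\le m}$, that this gradient is $\mu^{-1}$-Lipschitz (using $\sum_i\omega_i=1$), that $\gggg\in\Gamma_0(\GGG)$ with $\gggg^*(\vv)=\sum_i\omega_i g_i^*(v_i)$ and $\dom\gggg^*=\cart_{i=1}^m\dom g_i^*$, and that, if $\boldsymbol K$ denotes the saddle function of Problem~\ref{pro:pd} formed from $\hh,\gggg,\LL,\VV$ as in \eqref{e:saddle}, then $\boldsymbol K((x,\ldots,x),\vv)=h(x)+\sum_i\omega_i\scal{L_i^*v_i}{x}-\sum_i\omega_i g_i^*(v_i)=H(x,v_1,\ldots,v_m)$ for every $x\in\HH$ and $\vv\in\GGG$. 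Since the solution set of \eqref{e:primalmin} is assumed non-empty, the corresponding $\mathcal{P}_\VV$ is non-empty and we are in the setting of Problem~\ref{pro:pd} with $\beta=\mu$.

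Next I would verify that Algorithm~\ref{algo:min}, started from the diagonal point $\xx_0=(x_0,\ldots,x_0)\in\VV$ and fed with $\rr_n=(r_n,\ldots,r_n)$, together with its ergodic averages, is precisely Algorithm~\ref{a:apd} run in $\HHH\times\GGG$. Three points carry this: (a) $P_{\VV}\xx=\big(\sum_i\omega_i x_i,\ldots,\sum_i\omega_i x_i\big)$, so the primal updates preserve the diagonal and collapse to the scalar recursions for $p_n$ and $x_{n+1}$ in \eqref{e:Tsengaaamin}; (b) since $\gggg^*$ is separable and the metric $\UU^{-1}$ is block-diagonal for $\scal{\cdot}{\cdot}_{\GGG}$, the operator $\prox^{\UU^{-1}}_{(\tau_n/\gamma_n)\gggg^*}$ splits as $\big(\prox^{U_i^{-1}}_{(\tau_n/\gamma_n)g_i^*}\big)_{1\le i\le m}$, which (using the identification $J_{(\tau_n/\gamma_n)U_iA_i^{-1}}=\prox^{U_i^{-1}}_{(\tau_n/\gamma_n)g_i^*}$ already invoked in passing from Algorithm~\ref{algo:1} to Algorithm~\ref{a:apd}) is the dual step of \eqref{e:Tsengaaamin}; (c) the averages $\tilde\xx_N,\tilde\vv_N$ reduce to $(\tilde x_N,\ldots,\tilde x_N)$ and $(\tilde v_{1,N},\ldots,\tilde v_{m,N})$.

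It then remains to translate the hypotheses and read off the constant. Condition~(i) of Theorem~\ref{thm:erg} is exactly $\gamma_0\in\left]0,\mu\right[$; the identities $\E[\rr_n\mid\FF_n]=\nabla\hh(\xx_n)$ and, since $\sum_i\omega_i=1$, $\E[\norm{\rr_n-\nabla\hh(\xx_n)}_{\HHH}^2]=\E[\norm{r_n-\nabla h(x_n)}^2]$ yield conditions~(iii)--(iv) of Theorem~\ref{thm:erg} with the same constant $c_0$. For condition~(ii), a short computation with \eqref{e:scal} gives $\scal{\LL P_{\VV}\LL^*\vv}{\vv}_{\GGG}=\norm{\sum_{i=1}^m\omega_i L_i^*v_i}^2$, so by Jensen's inequality and the hypothesis that each $(\tau U_i)^{-1}-L_iL_i^*$ is positive semidefinite,
\[
\scal{(\tau\UU)^{-1}\vv}{\vv}_{\GGG}-\scal{\LL P_{\VV}\LL^*\vv}{\vv}_{\GGG}\ \ge\ \sum_{i=1}^m\omega_i\scal{\big((\tau U_i)^{-1}-L_iL_i^*\big)v_i}{v_i}\ \ge\ 0,
\]
i.e.\ $(\tau\UU)^{-1}-\LL P_{\VV}\LL^*$ is positive semidefinite. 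Applying \eqref{e:res1} to the diagonal point $(x,\ldots,x)\in\VV$ and to $\vv=(v_i)_{1\le i\le m}\in\dom\gggg^*$, and rewriting the resulting constant via $\norm{\xx_0-(x,\ldots,x)}_{\HHH}^2=\norm{x_0-x}^2$, $\norm{\UU}=\max_i\norm{U_i}$ and $\norm{\LL}\le\max_i\norm{L_i}$, gives \eqref{e:res1b}. I expect the only genuinely non-routine steps to be the coordinatewise splitting of $\prox^{\UU^{-1}}_{(\tau_n/\gamma_n)\gggg^*}$ (hence the faithful identification of the two algorithms) and the Jensen estimate turning the blockwise positivity hypothesis into condition~(ii); the rest is bookkeeping with the weighted norms.
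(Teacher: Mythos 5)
Your proposal is correct and is exactly the reduction the paper intends: the corollary is left without proof, being the ergodic counterpart of the preceding one, and the only route is to instantiate Theorem~\ref{thm:erg} in the weighted product spaces of Lemma~\ref{l:lem1} with $\hh(\xx)=\sum_i\omega_i h(x_i)$ and $\gggg(\vv)=\sum_i\omega_i g_i(v_i)$, which is what you do. Your two ``non-routine'' steps --- the coordinatewise splitting of $\prox^{\UU^{-1}}_{(\tau_n/\gamma_n)\gggg^*}$ under the block-diagonal metric and the Jensen estimate $\big\|\sum_i\omega_iL_i^*v_i\big\|^2\le\sum_i\omega_i\scal{L_iL_i^*v_i}{v_i}$ turning the blockwise hypothesis into condition (ii) of Theorem~\ref{thm:erg} --- are both verified correctly, and the remaining bookkeeping with the weighted norms matches the stated constant $c(x,\vv)$ up to the paper's own notational slips (e.g.\ the superscript placement in the prox of Algorithm~\ref{algo:min} versus Algorithm~\ref{a:apd}).
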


{\small {\bf Acknowledgments.} 
This material is based upon work supported by the Center for Brains, Minds and Machines (CBMM), funded by NSF STC award CCF-1231216. L. R. acknowledges the financial support of the Italian Ministry of Education, University and Research FIRB project RBFR12M3AC. S. V. is member of the Gruppo Nazionale per l?Analisi Matematica, la Probabilit\`a e le loro Applicazioni (GNAMPA) of the Istituto Nazionale di Alta Matematica (INdAM).
}

\end{document}